\crefname{hypothesis}{Hypothesis}{Hypotheses}
\crefname{fact}{Fact}{Facts}
\DeclareMathOperator{\dist}{dist} 
\DeclareMathOperator{\closu}{cl} 
\DeclareMathOperator{\sign}{sign}
\def\RR{\mathds R}
\def\NN{\mathds N}
\def\Mcal{\mathcal M}
\def\Bcal{\mathcal B}
\def\Rcal{\mathcal R}
\def\Bbar{\overline{\Bcal}}
\def\Scal{\mathcal{S}}
\def\supportS{\mathds{S}}
\def\Scal{\mathcal{S}}
\def\BPMAP{\texttt{BPMap}\xspace}
\def\BPMAPhoc{\texttt{BPMap-HOC}\xspace}
\def\BPMAPbin{\texttt{BPMap-BIN}\xspace}
\def\BPhocbin{\texttt{BPMap-HOC-BIN}\xspace}
\DeclareMathOperator{\bap}{bap}
\newcommand{\NormTwo}[1]{\left\lVert #1 \right\rVert_2}
\newcommand{\NormOne}[1]{\left\lVert #1 \right\rVert_1}
\newcommand{\NormInf}[1]{\left\lVert #1 \right\rVert_\infty}
\newcommand{\abs}[1]{\left\lvert #1 \right\rvert}
\newcommand{\scal}[2]{\left\langle #1, #2 \right\rangle}
\newlist{lista}{enumerate}{1}
\setlist[lista]{label=\alph*., nosep,leftmargin=*,align=right}
\newlist{listi}{enumerate}{1}
\setlist[listi]{label={\upshape(\roman*\upshape)},leftmargin=*,align=right, widest=iii,nosep, format=\bf}
\begin{document}

\title{Basis pursuit by inconsistent alternating projections\thanks{
\funding{\textbf{RB} was partially supported by Brazilian agency Fundação de Amparo à Pesquisa do Estado do
    Rio de Janeiro (Grant E-26/201.345/2021) and Conselho Nacional de    Desenvolvimento Científico e Tecnológico -- CNPq (Grants 407147/2023-3 and 403197/2025-2); \textbf{YBC} was partially supported by the USA agency National Science    Foundation (Grant DMS-2307328);
     \textbf{LRS} was also partially funded by Brazilian agencies CNPq (Grants 310571/2023-5, 407147/2023-3 and 403197/2025-2) and Fundação de Amparo  ao Estado de Santa Catarina -- FAPESC (Edital 21/2024, Grant 2024TR002238)}; \textbf{PJSS} was partially supported by CNPq (Grant 312394/2023-3) and FAPESP (Grants 2013/07375-0, 2023/08706-1).    
     }}

\author{Roger Behling\thanks{Departamento de Matemática, Universidade Federal de Santa Catarina, Blumenau, SC, Brazil 
(\email{rogerbehling@gmail.com}, \email{l.r.santos@ufsc.br}).}
\and Yunier Bello-Cruz\thanks{Department of Mathematical Sciences, Northern Illinois University, DeKalb, IL, USA 
(\email{yunierbello@niu.edu}).}
\and Luiz-Rafael Santos\footnotemark[1]
\and \break Paulo J.~S.~Silva\thanks{Department of Applied Mathematics, IMECC/Universidade Estadual de Campinas, Campinas, SP, Brazil 
(\email{pjssilva@unicamp.br}).}}

\maketitle

\begin{abstract}
  Basis pursuit is the problem of finding a vector with smallest $\ell_1$-norm among the solutions of a given linear system of equations. It is a well-known convex relaxation of the sparse affine feasibility problem, where sparse solutions to underdetermined systems are sought. Since basis pursuit admits a linear programming reformulation, standard LP solvers are directly applicable.  
  We instead address the basis pursuit directly in its $\ell_1$-minimization form, without LP reformulation, via a scheme that uses alternating projections in its subproblems. These subproblems are designed to be inconsistent in the sense that they relate to two non-intersecting sets. Recently in [R.~Behling, Y.~Bello-Cruz and L.-R.~Santos, \emph{SIAM J. Optim.}, 31 (2021), pp.~2863–2892], inconsistency coming from infeasibility has been shown to accelerate convergence of alternating projections. We deliberately enforce this inconsistency by constructing subproblems whose feasible sets are disjoint by design. We prove that the resulting $\ell_1$-radii converge linearly to the optimal
value, and that when the solution is unique, all generated sequences converge
linearly to it at a rate governed by a natural error bound between the
feasible set and the optimal $\ell_1$-ball. The proposed method is numerically
  competitive against state-of-the-art open-source solvers on synthetic and
  real-world instances.

\end{abstract}

\begin{keywords}
  Basis pursuit, $\ell_1$-minimization, method of Alternating
Projections, convex feasibility problem, sparse recovery, compressed sensing, linear convergence, Hoffman's lemma
\end{keywords}
\begin{MSCcodes} 49M27, 65K05, 65B99, 90C25
\end{MSCcodes}

\section{Introduction}\label{sec:intro}
Recovering an unknown signal $x^\ast \in \RR^n$ from linear measurements is a central problem
in signal processing and related fields. The measurements take the form:
\begin{equation}\label{IP}
    A x = b,
\end{equation}
where \( A \in \RR^{m \times n} \) is a known matrix with full row rank, and \( b = A x^\ast \in \RR^m \) represents the observed data. In typical scenarios where $m\le n$, the linear system \eqref{IP} is underdetermined and thus has infinitely many solutions.

To address this ill-posedness, it is often assumed that the original signal $x^\ast$ is sparse. A widely used approach for recovering such signals is to solve the \emph{Basis Pursuit Problem} \cite{Chen:2001} 
\begin{equation}\label{BP}
\min \| x \|_1 \qquad \mbox{s.t.} \qquad Ax=b. \tag{BP}
      \end{equation}

Here and throughout, $\|\cdot\|_1$  denotes the $\ell_1$-norm, \emph{i.e.}, for any $x\in\RR^n$ $\|x\|_1\coloneqq\sum_{i=1}^n|x_i|$. The \(\ell_1\)-norm serves as a convex proxy for sparsity, promoting solutions with few nonzero entries. A key assumption, both standard and desirable, is that problem \eqref{BP} admits a unique solution, which ideally coincides with the true sparse vector $x^\ast$. 
This uniqueness plays a critical role in theoretical recovery guarantees and has been the focus of extensive research; see, for instance, \cite{Bruckstein:2009,Candes:2005,Gilbert:2017,Mousavi:2019,Bello-Cruz:2022,Demanet:2016,Zhang:2015} for a partial list of relevant contributions. 

The problem \eqref{BP} is a convex relaxation of the \emph{Sparse Affine Feasibility Problem}, which is the problem of finding a sparse vector $x$ that satisfies $Ax=b$, that is,
\begin{equation}\label{L0}
\min \, \lVert x \rVert_0 \qquad \text{s.t.} \qquad Ax=b, \tag{SFP}
\end{equation} where $\lVert x \rVert_0$ counts the number of nonzeros entries of $x$. For an $x\in \RR^n$, we define $$\supportS = \supportS(x) \coloneqq \{ i\in \{1,\ldots,n\}\mid x_i\neq 0\}$$ as the \emph{support} of $x$.  In this sense, $\lVert x \rVert_0 = |\supportS(x)|$. Also, using $\supportS$, we denote $A_{\supportS}$ as the submatrix of $A$ formed by the columns indexed by $\supportS$ whereas $x_{\supportS}$ is the subvector of $x$ built from the entries indexed by $\supportS$.

Problem \eqref{L0} is a fundamental problem in Compressed Sensing (CS),  the framework for recovering sparse signals from incomplete measurements; see \cite{Bruckstein:2009,Donoho:2006}. While \eqref{L0} is known to be NP-hard, there are some instances where the solutions of \eqref{BP} and \eqref{L0} coincide \cite{Candes:2006,Donoho:2006}.  For instance, it was established in \cite{Candes:2008a}  that the \emph{Restricted Isometry Property} (RIP) of matrix $A$ suffices for the correspondence of solutions to  \eqref{L0} with solutions to \eqref{BP}. Unfortunately,  RIP is a strong condition that is not easy to verify, since it is NP-hard~\cite{Tillmann:2014}. Another useful condition ensuring equivalence between \eqref{BP} and \eqref{L0} is the \emph{Exact Recover Condition} (ERC) \cite[Thm.~3.1]{Tropp:2004}, which depends on the support $\supportS$ of a solution; of course, finding such a set is a combinatorial problem~\cite{Dessole:2023}.

The importance of the Basis Pursuit Problem \eqref{BP} led to a great deal of research devoted to the development of efficient methods for solving it, mainly for large-scale problems. Several approaches have been proposed for solving \eqref{BP}. One can think, for instance, of reformulating it as a linear program (LP) and applying to the reformulation  any standard LP solver~\cite{Candes:2005}. The use of splitting methods is also a popular strategy. For instance, in \cite{Hesse:2014,Demanet:2016}, the  Douglas-Rachford method was employed to solve \eqref{BP}. Given the nonsmoothness of the $\ell_1$-norm, subgradient-based methods were also applied to solve the Basis Pursuit Problem \eqref{BP} \cite{Lorenz:2014}. Lorenz and co-authors \cite{Lorenz:2015} not only compare many solvers applied to \eqref{BP}, but also establish a useful heuristic for an optimality certificate.; see more in Section~\ref{sec:numerical} where we compare the implementation of our method with some of these methods.

In this paper, we propose a new technique for solving \eqref{BP} based on the method of Projection onto Convex Sets (POCS)~\cite{Combettes:1990}. POCS is a classical method for solving convex feasibility problems, that is, problems of the form 
\begin{equation}\label{eq:feasibility}
\text{find } w \in \bigcap_{i=1}^m W_i,
\tag{CFP}
\end{equation}
where $W_i$ are closed convex sets in $\RR^n$ employing orthogonal projections. The point $P_{W}(w)\in W$ is the projection of  $w\in \RR^n$ onto the closed  convex set $W\subset\RR^n$ if and only if $\scal{y-P_{W}(w)}{w - P_{W}(x)}\le 0,$ for all $y\in W$ where $\scal{\cdot}{\cdot}$ denotes the Euclidean inner product in $\RR^n$. Hence, $P_W(w)=\arg\min_{y\in W}\|y-w\|_2$ where $\NormTwo{\cdot}$ refers to the $\ell_2$-norm, \emph{i.e.}, for any $x\in\RR^n$, $\NormTwo{x}\coloneqq\sqrt{\scal{x}{x}}$. POCS basically employs successive alternating projections onto the considered sets $W_i$. Famously,  when only two sets are considered, POCS is known as the \emph{Method of Alternating Projections} (MAP) 
\cite{Bauschke:1993,Bauschke:1996}. 
Indeed, given any point $w\in \RR^{n}$ and supposing $W,Y\subset \RR^{n}$ are  closed, convex and nonempty sets, the sequence $(w^{k})_{k\in\NN}$ given by
\begin{equation}\label{eq:MAPsequence}
w^0 = w,\quad w^{k+1} = P_WP_Y(w^{k}),
\end{equation}
for every $k\in\NN$ is called  the \emph{Alternating Projection Sequence} starting at $w^{0}=w$.

The solution of problem \eqref{BP} can be characterized as a two-set convex feasibility problem, where the underlying sets are the affine subspace $\Mcal \coloneqq  \{x \in \RR^n \mid Ax = b\}$ and the (optimal) $\ell_1$-ball  $\Bbar \coloneqq  \{x \in \RR^n \mid \NormOne{x} \leq \bar r\}$, where $\bar r \geq 0$ is the optimal value of \eqref{BP}.  This means that the solution set of problem  \eqref{BP}, denoted throughout the text as $\Scal$, is precisely $ \Mcal\cap \Bbar$. 

Naturally, one does not know $\bar r$ in advance. To circumvent this,  our proposed algorithm called BP-MAP computes a sequence $(r_k)_{k\in\NN}$ such that $r_k \uparrow \bar r$, as $k \to \infty$.  While $r_k < \bar r$, we have infeasibility, that is,  $\Mcal\cap \Bcal_1(0,r_k) = \emptyset$. 
The main idea behind our method is to use MAP to seek a \emph{Best Approximation Pair} (BAP) between  $\Mcal$ and $\Bcal^k = \Bcal_1(0,r_k) \coloneqq \{z \in \RR^n\mid \NormOne{z} \leq  r_k \}$, \emph{i.e.}, a pair $(x^k,z^k) \in \Mcal \times  \Bcal_k$ that minimizes the Euclidean distance between these two sets. Once found, we use this distance to define $r_{k+1}$ and thus the sequence $(r_k)_{k\in\NN}$ is generated. Recently, in \cite{Behling:2021}, it was shown that infeasibility can be beneficial for the convergence of MAP, even providing possible finite convergence, depending on some regularity conditions on the underlying sets. In this work, we adapt some ideas from \cite{Behling:2021} into the context of the Basis Pursuit Problem \eqref{BP}.

In summary, we initialize BP-MAP by setting $r_0 \coloneqq 0$ and $z^0 \coloneqq 0 \in \RR^n$. Then,  for all $k\geq 0$, we compute $x^k \coloneqq P_{\Mcal}(z^k)$, updating $r_{k+1} = r_k + \NormTwo{d^k}$, where $d^k \coloneqq z^k - x^k$, and then we compute $z^{k+1}$ by inner MAP iterations that seek a BAP regarding  $\Mcal$ and $\Bcal^k$. Algorithm~\ref{alg:BP_MAP} formally presents the method.  

\begin{algorithm}[H]\small
  \begin{algorithmic}[1]
  \Require {$r_0 \coloneqq 0 \in \RR$, $z^0 \coloneqq 0 \in \RR^n$ and $\Bcal^0 \coloneqq \{z^0\}$} 
  \For{$k= 0,1,2, \ldots$}
     \State Set
      \begin{equation}\label{eq:def_x_k}
        x^k \coloneqq P_{\Mcal}(z^k)
      \end{equation}\label{alg:BP_MAP_stepProj}
      \State Compute the displacement vector $d^k \coloneqq z^k - x^k$
      \State Update 
      \begin{equation}\label{eq:def_r_k}
      r_{k+1} = r_k + \NormTwo{d^k}
      \end{equation}
      \label{alg:BP_MAP_rk_update}
      \State Find $z^{k+1}$ using MAP between $\Bcal^{k+1} \coloneqq   \Bcal_1(0;r_{k+1})$ and $\Mcal$, that is,  
    set $z^{k,0} \coloneqq z^k$ and compute  
   \begin{equation}\label{eq:def_z_k+1}
      z^{k+1} = \lim_{j\to \infty} P_{\Bcal^{k+1}} P_{\Mcal} (z^{k,j})      \end{equation}

      \label{alg:BP_MAP_stepMAP}
%
\EndFor
\end{algorithmic}
  \caption{BP-MAP -- Infeasible MAP for Basis Pursuit}\label{alg:BP_MAP}  
\end{algorithm}

The paper is organized as follows. Section~\ref{sec:facts} collects background on MAP needed
for the analysis of Algorithm~\ref{alg:BP_MAP}. Section~\ref{sec:convergence} establishes
convergence of BP-MAP, including linear convergence rates. Section~\ref{sec:numerical} reports
numerical experiments comparing Algorithm~\ref{alg:BP_MAP} with two state-of-the-art methods,
and introduces two variants aimed at improving practical performance. Section~\ref{sec:concluding}
closes with final remarks and directions for future work.

\section{Well-definedness of Algorithm~\ref{alg:BP_MAP}}\label{sec:facts}

In this section we collect some background material on MAP that leads to the well-definedness of Algorithm~\ref{alg:BP_MAP}.

Let  $W,Y\subset \RR^{n}$ be closed, convex and nonempty. 
The \emph{distance} between $W$ and $Y$ is given by
$\label{eq:dist}
\dist(W,Y) \coloneqq \inf \{\NormTwo{w-y}\mid w\in W, y\in Y\}.$
We call a  pair $(\bar x,\bar y)\in W\times Y$ attaining  the distance between $W$ and $Y$, that is,  $\dist(\bar w,\bar y) = \dist(W,Y)$, as \emph{Best Approximation Pair} (BAP)  relative to $W$ and $Y$.  The set of all BAPs relative to $W$ and $Y$ is denoted by $\bap(W,Y)\subset W\times Y $ and, accordingly, we define the  sets 
\[\bap_{Y}(W) \coloneqq\{x\in W\mid (x,y)\in \bap(W,Y)\}
\] and 
\[\bap_{W}(Y) \coloneqq\{y\in Y\mid (x,y)\in \bap(W,Y)\}
.
\] 
Note that $\bap_{Y}(W)$ (respectively $\bap_{W}(Y)$) is the subset of points in $W$  (respectively, in $Y$) \emph{nearest} to $Y$ (respectively, to $W$).

In the consistent case, that is,  when $W\cap Y$ is nonempty, we have $\bap_{W}(Y) = \bap_{Y}(W) = W\cap Y$. When $W\cap Y = \emptyset$, we define the \emph{displacement vector} as $d\coloneqq P_{\closu(W-Y)}(0)$. Hence, $\dist(W,Y) = \NormTwo{d}$ and $\dist(W,Y)$ is attained if and only if $d\in W-Y$. 

Cheney and Goldstein~\cite{Cheney:1959} established that if one of the sets is compact or if one of the sets is finitely generated, the  fixed point set of the operator $P_WP_Y(\cdot)$ is nonempty and the sequence \eqref{eq:MAPsequence} converges to a fixed point of this operator. The general result was latter summarized and extended by  \cite[Thm.~4.8]{Bauschke:1994} as follows. 


\begin{proposition}[Convergence of MAP]\label{prop:conv-MAP} Let $(w^k)_{k\in\NN}$ be an Alternating Projection Sequence as given in \eqref{eq:MAPsequence}. Then,  \[w^k-P_Y(w^k)\to d,\] where $d$ is called the \emph{displacement vector} regarding $W$ and $Y$.  Moreover, 
if $\dist(W, Y)$ is attained then   $w^k\to \bar w\in \bap_{Y}(W)$ and $P_Y(w^k)\to \bar y\coloneqq \bar w-d\in \bap_{W}(Y)$;
\end{proposition}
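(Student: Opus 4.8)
The plan is to deduce the statement from the elementary theory of firmly nonexpansive operators, using that orthogonal projections onto closed convex sets are firmly nonexpansive and that the composition $T \coloneqq P_W P_Y$ is therefore averaged, hence nonexpansive. Writing $u^k \coloneqq P_Y(w^k)$, so that $w^{k+1} = P_W(u^k) = T w^k$, the object whose convergence must be controlled is the half-step displacement $p^k \coloneqq w^k - P_Y(w^k) = w^k - u^k$. Since $w^k \in W$ for $k \ge 1$ and $u^k \in Y$, each $p^k$ lies in $W - Y$, while the target $d = P_{\closu(W-Y)}(0)$ is by definition the element of minimal norm of $\closu(W-Y)$; equivalently, $d$ is characterized variationally by $\scal{e}{d} \ge \NormTwo{d}^2$ for every $e \in \closu(W-Y)$.

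First I would establish monotonicity of the gap. Using that $u^{k}=P_Y(w^k)$ is the nearest point of $Y$ to $w^k$ and that $w^{k+1}=P_W(u^k)$ is the nearest point of $W$ to $u^k$, together with $w^k \in W$ and $u^k \in Y$, a short chain of the defining projection inequalities yields $\NormTwo{p^{k+1}} \le \NormTwo{u^k - w^{k+1}} \le \NormTwo{p^k}$. Hence $(\NormTwo{p^k})_k$ is nonincreasing and bounded below by $\dist(W,Y) = \NormTwo{d}$, so it converges to some $\ell \ge \NormTwo{d}$. Feeding the stronger ``sum form'' of firm nonexpansiveness of $P_Y$ and $P_W$ into the same chain then produces a telescoping estimate showing that the successive differences of iterates, and consequently the gap differences $p^{k+1}-p^k$, are square-summable; in particular $p^{k+1}-p^k \to 0$, so the gap vectors are asymptotically regular.

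The crux is to upgrade $\ell \ge \NormTwo{d}$ to the equality $\ell = \NormTwo{d} = \dist(W,Y)$ and then to pin down the full vector limit. Once $\NormTwo{p^k}\to\NormTwo{d}$ is known, the variational inequality $\scal{p^k}{d}\ge\NormTwo{d}^2$ combined with Cauchy--Schwarz forces $\scal{p^k}{d}\to\NormTwo{d}^2$, and then the identity $\NormTwo{p^k-d}^2=\NormTwo{p^k}^2-2\scal{p^k}{d}+\NormTwo{d}^2\to 0$ delivers $p^k\to d$, which is the first assertion. I expect the equality $\ell=\dist(W,Y)$ to be the main obstacle: in the attained case it drops out of the cluster-point analysis below, but in full generality (distance not attained, iterates possibly unbounded) it is precisely the delicate convergence content of \textcite[Thm.~4.8]{Bauschke:1994}, and establishing it from scratch requires the firm-nonexpansiveness machinery rather than a soft compactness argument.

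For the ``moreover'' part, assume $\dist(W,Y)$ is attained, so that $d \in W-Y$ and there is a pair attaining the distance; consequently $\Fix(T)\neq\emptyset$, and these fixed points are exactly the $W$-parts of best approximation pairs, i.e.\ the elements of $\bap_Y(W)$. Nonexpansiveness of $T$ gives, for any such $\bar w$, the Fej\'er inequality $\NormTwo{w^{k+1}-\bar w} = \NormTwo{Tw^k - T\bar w} \le \NormTwo{w^k-\bar w}$, which makes $(w^k)$ Fej\'er monotone and hence bounded. Any cluster point $w^\ast$ lies in $W$ (as $W$ is closed and $w^k\in W$ for $k\ge 1$) and, by the already proven limit $p^k\to d$ together with continuity of $P_Y$, satisfies $w^\ast - P_Y(w^\ast) = d$, whence $\NormTwo{w^\ast - P_Y(w^\ast)} = \dist(W,Y)$; thus $(w^\ast,P_Y(w^\ast))$ attains the distance, so $w^\ast = P_W(P_Y(w^\ast)) = Tw^\ast \in \bap_Y(W)$. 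Since a Fej\'er-monotone sequence all of whose cluster points lie in the reference set converges to a point of that set, we obtain $w^k \to \bar w \in \bap_Y(W)$; continuity of $P_Y$ with $p^k\to d$ then gives $P_Y(w^k)\to \bar w - d$, which is the point $\bar y \in \bap_W(Y)$ of the statement, completing the argument.
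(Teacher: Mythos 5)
First, a framing remark: the paper itself does not prove this proposition. It is stated as background, quoted from \textcite[Thm.~4.8]{Bauschke:1994} (with the attained case going back to \textcite{Cheney:1959}), so your attempt is being measured against the literature proof rather than against an argument in the paper. Your reconstruction has the right skeleton --- monotonicity of the gap norms $\NormTwo{p^k}$, the variational characterization $\scal{e}{d}\ge\NormTwo{d}^2$ for all $e\in\closu(W-Y)$, and the correct observation that $\NormTwo{p^k}\to\NormTwo{d}$ forces $p^k\to d$ --- but it contains a genuine gap that you yourself flag: the equality $\lim_k\NormTwo{p^k}=\dist(W,Y)$ is never established. In the general case (distance not attained, iterates possibly unbounded) you explicitly defer it to the very theorem being proved; since that equality \emph{is} the substance of the first assertion, the first claim remains unproven. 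Worse, the attained case is circular as written: you assert that the equality ``drops out of the cluster-point analysis below,'' yet that analysis begins with ``the already proven limit $p^k\to d$,'' which you only obtained under the assumption that the equality holds.

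The circularity is repairable, and the repaired argument is the standard one: when $\dist(W,Y)$ is attained, $\Fix(P_WP_Y)$ is nonempty and equals $\bap_Y(W)$ (Cheney--Goldstein); $T=P_WP_Y$ is averaged, so the telescoping estimate against a fixed point gives $w^{k+1}-w^k\to0$ and Fej\'er monotonicity of $(w^k)_{k\in\NN}$ with respect to $\Fix(T)$; cluster points are then fixed points by continuity of $T$, and Fej\'er monotonicity upgrades this to $w^k\to\bar w\in\bap_Y(W)$. Only at this point should you deduce the gap limit: $p^k\to\bar w-P_Y(\bar w)$ by continuity of $P_Y$, and this vector equals $d$ because it lies in $W-Y$ with norm $\dist(W,Y)$ and the minimal-norm element of $\closu(W-Y)$ is unique. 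Ordered this way, your ``moreover'' part is sound --- and it covers everything the paper actually needs, since MAP is applied there only to polyhedra, for which the distance is always attained. Two smaller slips: square-summability of the iterate differences $w^{k+1}-w^k$ is unjustified in the absence of a fixed point (the firm-nonexpansiveness telescoping yields square-summability of the gap differences $p^{k+1}-p^k$ only); and asymptotic regularity of the gaps alone does not yield $\lim_k\NormTwo{p^k}=\dist(W,Y)$, which is precisely why the unattained case genuinely requires the machinery of \textcite{Bauschke:1994} rather than the soft argument sketched here.
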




We now establish the well-definedness of Algorithm~\ref{alg:BP_MAP}.

\begin{theorem}\label{thm:well-definedness}
  Algorithm~\ref{alg:BP_MAP} is well-defined. In particular, the limit
  in~\eqref{eq:def_z_k+1} exists and is finite for every $k \geq 0$.
\end{theorem}

\begin{proof}
Steps~\ref{alg:BP_MAP_stepProj}--\ref{alg:BP_MAP_rk_update} present no difficulty: $P_{\Mcal}(z^k)$ is well-defined because $\Mcal$ is a closed nonempty affine subspace of $\RR^n$; $d^k = z^k - x^k$ is then a finite vector; and $r_{k+1} = r_k + \NormTwo{d^k}$ is a finite real number.

It remains to verify Step~\ref{alg:BP_MAP_stepMAP}, that is, that the limit in~\eqref{eq:def_z_k+1} exists. Both $\Mcal$ and $\Bcal^k =\Bcal_1(0;r_k)$ are polyhedra: $\Mcal$ is defined by the linear system $Ax = b$, and $\Bcal_1(0;r_k)$ is the convex hull of the $2n$ vertices $\{\pm r_k e_i\}_{i=1}^n$, where $e_i$ are the standard basis vectors. Since the distance between two polyhedra is always
attained~\cite[Thm.~5]{Cheney:1959}, Proposition~\ref{prop:conv-MAP}
applies and guarantees that the alternating projection sequence
$(z^{k,j})_{j\in\NN}$ converges to a point in $\bap_{\Mcal}(\Bcal^k)$.
Hence, the limit in~\eqref{eq:def_z_k+1} exists for every $k \geq 0$.
\end{proof}

As a consequence, $d^k = z^k - x^k$ is the displacement vector between
$\Bcal^k$ and $\Mcal$, that is, $\dist(\Bcal^k, \Mcal) = \NormTwo{d^k}$
for every $k \geq 0$.

\section{Convergence analysis}\label{sec:convergence}

In this section we prove that the sequence generated by Algorithm~\ref{alg:BP_MAP} is bounded and all its cluster points are solutions of the Basis Pursuit Problem \eqref{BP}. As an immediate consequence, if \eqref{BP} has a unique solution, the sequence converges to that solution.
Regarding the rate of convergence, we prove that the approach to the optimal ball regarding \eqref{BP} is linear, with respect to the $\ell_1$-radii generated by Algorithm~\ref{alg:BP_MAP}.

We begin by establishing that the radii in Algorithm~\ref{alg:BP_MAP} are monotonically increasing and converging to the optimal value of \eqref{BP}, which is the corresponding radius of the optimal $\ell_1$-ball.
\begin{lemma} \label{lemma:r_k_to_rbar}
  Let $\bar r$ be the optimal value of~\eqref{BP} and let the sequence
  $(r_k)_{k\in\NN}$ be defined by~\eqref{eq:def_r_k} in
  Algorithm~\ref{alg:BP_MAP}. Then,
  \begin{listi}
    \item for all $k \geq 0$, we have $0 \leq r_k \leq r_{k+1} \leq \bar{r}$;
    \item the sequence $(r_k)_{k\in\NN}$ converges to $\bar r$;
    \item the sequence $(d^k)_{k\in\NN}$ converges to 0.
  \end{listi}
\end{lemma}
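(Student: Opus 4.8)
The plan is to reduce both the monotonicity and the bound to a single scalar function. Define $\phi(r)\coloneqq\dist(\Bcal_1(0,r),\Mcal)$, the $\ell_2$-distance from the $\ell_1$-ball of radius $r$ to the affine set $\Mcal$. By the observation following Theorem~\ref{thm:well-definedness}, $\NormTwo{d^k}=\dist(\Bcal^k,\Mcal)=\phi(r_k)$, so the update \eqref{eq:def_r_k} reads $r_{k+1}=r_k+\phi(r_k)$. I would first record two elementary properties of $\phi$: it is nonnegative and nonincreasing (enlarging the ball cannot increase its distance to $\Mcal$), and $\phi(r)=0$ exactly when $\Bcal_1(0,r)\cap\Mcal\neq\emptyset$, i.e. when $r\geq\bar r$ (using that $\Bcal_1(0,r)$ is compact and $\bar r$ is the optimal value of \eqref{BP}). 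Granting these, the inequalities $0\leq r_k\leq r_{k+1}$ of item (i) are immediate from $r_0=0$ and $\phi\geq0$, and the only real content is the upper bound $r_{k+1}\leq\bar r$.

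For the upper bound the key estimate is
\[
\phi(r)\ \leq\ \bar r-r\qquad\text{for all }0\leq r\leq\bar r.
\]
Let $\bar x\in\Mcal$ be an optimal solution of \eqref{BP}, so $\NormOne{\bar x}=\bar r$; assume $\bar r>0$ (if $\bar r=0$ then $0\in\Mcal$ and the algorithm stalls at $r_k\equiv0=\bar r$). The radially scaled point $p\coloneqq(r/\bar r)\bar x$ lies in $\Bcal_1(0,r)$ because $\NormOne{p}=r$, whence
\[
\phi(r)\ \leq\ \NormTwo{p-\bar x}\ =\ \Bigl(1-\tfrac{r}{\bar r}\Bigr)\NormTwo{\bar x}\ \leq\ \Bigl(1-\tfrac{r}{\bar r}\Bigr)\NormOne{\bar x}\ =\ \bar r-r,
\]
the middle step being the norm comparison $\NormTwo{\cdot}\leq\NormOne{\cdot}$. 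With this in hand, item (i) follows by induction: $r_0=0\leq\bar r$, and if $r_k\leq\bar r$ then $r_{k+1}=r_k+\phi(r_k)\leq r_k+(\bar r-r_k)=\bar r$.

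Passing to the limit then gives the remaining two items cheaply. By item (i) the sequence $(r_k)$ is nondecreasing and bounded above by $\bar r$, so it converges to some $r^\ast\leq\bar r$; consequently $\NormTwo{d^k}=\phi(r_k)=r_{k+1}-r_k\to0$, which is item (iii). For item (ii) I would invoke the monotonicity of $\phi$: since $r_k\leq r^\ast$ for all $k$, we have $0\leq\phi(r^\ast)\leq\phi(r_k)\to0$, forcing $\phi(r^\ast)=0$; by the characterization above this gives $r^\ast\geq\bar r$, and combined with $r^\ast\leq\bar r$ we conclude $r^\ast=\bar r$.

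I expect the main obstacle to be the key estimate $\phi(r)\leq\bar r-r$ — in particular, recognizing that the right comparison point is the radially scaled optimizer $(r/\bar r)\bar x$ and that it is precisely the gap $\NormTwo{\bar x}\leq\NormOne{\bar x}=\bar r$ between the two norms that closes the bound exactly at $\bar r-r$. The monotonicity and convergence arguments are then routine manipulations of monotone scalar sequences.
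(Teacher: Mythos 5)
Your proof is correct, and its core is the same as the paper's: for item (i), both arguments hinge on comparing the best approximation pair $(z^k,x^k)$ against the pair $\bigl((r_k/\bar r)\bar x,\,\bar x\bigr)$ built from the radially scaled optimizer, then closing the bound with $\NormTwo{\cdot}\le\NormOne{\cdot}$; your estimate $\phi(r)\le\bar r-r$ is precisely the paper's induction step, packaged as a statement about the scalar function $\phi(r)\coloneqq\dist(\Bcal_1(0,r),\Mcal)$, and both treat $\bar r=0$ the same way. Where you genuinely diverge is the limit argument for item (ii). The paper lets $r_k\uparrow\hat r\le\bar r$ and then asserts the continuity-type claim $\dist(\Bcal^k,\Mcal)\to\dist(\Bcal_1(0;\hat r),\Mcal)$ --- true, but left unjustified --- to conclude $\dist(\Bcal_1(0;\hat r),\Mcal)=0$ and hence $\hat r=\bar r$. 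You instead observe that $\phi$ is nonincreasing (the balls are nested), so $0\le\phi(r^\ast)\le\phi(r_k)=r_{k+1}-r_k\to 0$ forces $\phi(r^\ast)=0$, and attainment of the distance (compactness of the ball, or polyhedrality as in Theorem~\ref{thm:well-definedness}) then gives $r^\ast\ge\bar r$. This is a small but real improvement: a one-line monotonicity inequality replaces a continuity assertion the paper never proves, at no extra cost. Item (iii) is handled identically in both proofs, via $\NormTwo{d^k}=r_{k+1}-r_k\to 0$.
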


\begin{proof}
  If $\bar r = 0$, then $0\in \RR^n$ is the unique solution of \eqref{BP} and the claim is true, because Algorithm~\ref{alg:BP_MAP} generates constant sequences such that $z^k = x^k = 0$, for every $k\geq 0$. 
  
 Let us suppose $\bar r >0$. Regarding item (i), since $r_{k+1}\coloneqq r_k+ \NormTwo{d^k} = r_k+\NormTwo{z^k-x^k}$  and $r_0\coloneqq 0$, we obtain $0 \leq r_k \leq r_{k+1}$, for all $k\geq 0$. 
  
  We now prove by induction that $r_{k} \leq \bar r$, for all $k \geq 0$. First, note that $r_0 = 0 < \bar r$. Now consider the induction hypothesis $r_{k} \leq \bar r$, for some $k \geq 0$. Let $\bar{x}$ be a solution of \eqref{BP}. Then, $\NormOne{\bar x} = \bar r$ and  \( \dfrac{r_k}{\bar{r}} \bar{x} \in \Bcal^k  = \Bcal_1(0;r_k).\) 
Indeed,
\[
\NormOne{\frac{r_k}{\bar{r}} \bar{x}}=\frac{r_k}{\bar{r}}\NormOne{\bar{x}}=\frac{r_k}{\bar{r}} \cdot \bar{r}=r_k .
\]
Thus, 
\begin{equation}\label{eq:def_r_k+1}
r_{k+1} = r_k + \NormTwo{z^k-x^k} \leq r_k+\NormTwo{\frac{r_k}{\bar{r}} \bar{x} - \bar{x}}\end{equation}
because $\left(z^k, x^k\right) \in \bap\left(\Bcal^k, \Mcal \right)$, $\dfrac{r_k}{\bar{r}} \bar{x}\in \Bcal^k$, and $\bar{x} \in \Mcal$.
Therefore, 
\[
\begin{aligned} r_{k+1} &\leq r_k+\NormTwo{\dfrac{r_k}{\bar{r}} \bar{x} - \bar{x}} \leq r_k+\NormOne{\dfrac{r_k}{\bar{r}} \bar{x} - \bar{x}} 
\\
& =r_k+\abs{\frac{r_k}{\bar{r}} - 1}\|\bar{x}\|_1=r_k+\abs{\frac{r_k}{\bar{r}} - 1} \cdot \bar{r} \\
& =r_k+\left( 1- \frac{r_k}{\bar{r}}\right) \cdot \bar{r}=r_k+\bar{r}-r_k=\bar{r} ,
\end{aligned}
\]
where the first inequality is by \eqref{eq:def_r_k+1}, the second follows from  norm equivalence~\cite[\S 2.2.2]{Golub:2013}, and we used the induction hypothesis to get $\frac{r_k}{\bar r} \leq  1$. Hence, item (i) holds.

As for item (ii) and (iii), the monotonicity and boundedness established in item (i) give $r_k \uparrow \hat r \leq \bar r$. Since $\Mcal$ is closed, the map $r \mapsto \dist(\Bcal_1(0;r), \Mcal)$ is continuous, so
  $\dist(\Bcal^k, \Mcal) \to \dist(\Bcal_1(0;\hat r), \Mcal)$. On the other hand, $\dist(\Bcal^k, \Mcal) = \NormTwo{d^k} = r_{k+1} - r_k \to 0$.  So, not only does $d^k \to 0$ but  $ \dist(\Bcal_1(0;\hat r), \Mcal)  =0$,  which implies that $\hat r = \bar r$. This establishes (ii) and (iii).
 \end{proof}

As a consequence of Lemma~\ref{lemma:r_k_to_rbar}, we have the following.

\begin{lemma}\label{lemma:NormOneConvergence}
  The scalar sequences $(\NormOne{z^k})_{k\in\NN}$ and
  $(\NormOne{x^k})_{k\in\NN}$ arising from Algorithm~\ref{alg:BP_MAP}
  converge to the optimal value $\bar r$ of~\eqref{BP}.
\end{lemma}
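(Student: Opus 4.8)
The plan is to sandwich each of $\NormOne{z^k}$ and $\NormOne{x^k}$ between two quantities that both tend to $\bar r$, exploiting the two membership constraints produced by one iteration of Algorithm~\ref{alg:BP_MAP} together with the radius bounds already secured in Lemma~\ref{lemma:r_k_to_rbar}.

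First I would record the two one-sided estimates coming directly from feasibility. Since $z^k \in \Bcal^k = \Bcal_1(0;r_k)$ by construction (the MAP limit in \eqref{eq:def_z_k+1} lands in the closed set $\Bcal^{k}$, and $z^0 = 0$ handles the base case), we have
\[
\NormOne{z^k} \le r_k \le \bar r ,
\]
where the last inequality is Lemma~\ref{lemma:r_k_to_rbar}(i). On the other side, $x^k = P_{\Mcal}(z^k) \in \Mcal$ satisfies $Ax^k = b$, so by optimality of $\bar r$ for \eqref{BP} we get
\[
\NormOne{x^k} \ge \bar r .
\]

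Next I would couple the two sequences through the displacement vector. Because $x^k = z^k - d^k$, the triangle inequality yields $\NormOne{x^k} \le \NormOne{z^k} + \NormOne{d^k}$ and $\NormOne{z^k} \ge \NormOne{x^k} - \NormOne{d^k}$. The crucial ingredient here is that $\NormOne{d^k} \to 0$: Lemma~\ref{lemma:r_k_to_rbar}(iii) supplies $\NormTwo{d^k} \to 0$, and finite-dimensional norm equivalence (e.g.\ $\NormOne{\cdot} \le \sqrt{n}\,\NormTwo{\cdot}$) transfers this to the $\ell_1$-norm. Combining with the previous estimates produces the sandwiches
\[
\bar r - \NormOne{d^k} \le \NormOne{z^k} \le \bar r
\qquad\text{and}\qquad
\bar r \le \NormOne{x^k} \le \bar r + \NormOne{d^k} .
\]

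Letting $k \to \infty$ and using $\NormOne{d^k} \to 0$, both squeezes close and deliver $\NormOne{z^k} \to \bar r$ and $\NormOne{x^k} \to \bar r$. I expect no serious obstacle here: the argument is essentially a squeeze resting on Lemma~\ref{lemma:r_k_to_rbar}, and the only points worth checking carefully are the two anchoring inequalities, namely that $x^k$ is admissible for \eqref{BP} (so that $\bar r$ is a legitimate lower bound, which is immediate from $x^k \in \Mcal$) and that $z^k$ genuinely obeys the current radius constraint $\NormOne{z^k} \le r_k$ (which relies on $z^k$ being the MAP output in $\Bcal^k$ rather than an arbitrary intermediate iterate).
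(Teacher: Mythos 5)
Your proof is correct, but it anchors the argument differently from the paper. The paper's proof first establishes the exact identity $\NormOne{z^k} = r_k$: since $(x^k,z^k)$ realizes the distance between the nonintersecting sets $\Mcal$ and $\Bcal^k$, the point $z^k$ must lie on the \emph{boundary} of the $\ell_1$-ball $\Bcal^k$; combined with Lemma~\ref{lemma:r_k_to_rbar} this gives $\NormOne{z^k}\to\bar r$ at once, and $\NormOne{x^k}\to\bar r$ then follows from the same coupling you use, namely $\abs{\NormOne{z^k}-\NormOne{x^k}}\le\NormOne{z^k-x^k}\le\sqrt{n}\NormTwo{d^k}\to 0$. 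You avoid the boundary fact altogether: you keep only the one-sided membership bound $\NormOne{z^k}\le r_k\le\bar r$, and compensate with a second anchor the paper never invokes --- feasibility of $x^k\in\Mcal$ for \eqref{BP}, which gives $\NormOne{x^k}\ge\bar r$ by optimality of $\bar r$ --- and then close both squeezes using $\NormOne{d^k}\to 0$. Your variant is slightly more elementary and robust: it needs only closedness of $\Bcal^k$ (so the MAP limit stays inside) and no discussion of whether $\Mcal$ and $\Bcal^k$ genuinely fail to intersect, whereas the paper's boundary claim would need a word of justification in the degenerate case $r_k=\bar r$. What the paper's route buys is the stronger exact statement $\NormOne{z^k}=r_k$, which is not merely a stepping stone: it is reused later, e.g.\ in the proof of Theorem~\ref{thm:linear_convergence_rk}, where the computation $\NormOne{\tilde z^k}=\NormOne{z^k}+\alpha\NormOne{\bar z^k-z^k}=r_k+\alpha(\bar r-r_k)$ relies on it. In exchange, your sandwiches come with explicit error bounds, $\bar r-\NormOne{d^k}\le\NormOne{z^k}\le\bar r$ and $\bar r\le\NormOne{x^k}\le\bar r+\NormOne{d^k}$, so any rate established for $(d^k)_{k\in\NN}$ transfers immediately to both scalar sequences.
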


\begin{proof}
  We first show that $\NormOne{z^k} = r_k$ for all $k \geq 0$. Since
  $(x^k, z^k)$ is a best approximation pair between the disjoint sets
  $\Mcal$ and $\Bcal_1(0;r_k)$, the point $z^k$ must lie on the boundary
  of $\Bcal_1(0;r_k)$: if it were in its interior, one could move $z^k$ toward $x^k$
  and strictly decrease the distance to $\Mcal$, contradicting optimality;
  see~\eqref{eq:def_x_k} and~\eqref{eq:def_z_k+1}. Hence, $\NormOne{z^k} =
  r_k$, and Lemma~\ref{lemma:r_k_to_rbar}(ii) gives $\NormOne{z^k} \to
  \bar r$.

  For $(\NormOne{x^k})_{k\in\NN}$, note that by the triangle inequality,
  norm equivalence, and~\eqref{eq:def_r_k},
  \begin{equation}\label{eq:NormOneradius_ineq}
    \abs{\NormOne{z^k} - \NormOne{x^k}}
    \leq \NormOne{z^k - x^k}
    \leq \sqrt{n}\,\NormTwo{z^k - x^k}
    = \sqrt{n}\,\NormTwo{d^k}
    = \sqrt{n}(r_{k+1} - r_k).
  \end{equation}
  Since $\NormOne{z^k} \to \bar r$ and $r_{k+1} - r_k = \NormTwo{d^k} \to 0$
  by Lemma~\ref{lemma:r_k_to_rbar}(iii), we conclude that $\NormOne{x^k}
  \to \bar r$.
\end{proof}

The following is the main convergence result of this section.

\begin{theorem}\label{thm:convergence}
  The sequences $(z^k)_{k\in\NN}$ and $(x^k)_{k\in\NN}$ generated by
  Algorithm~\ref{alg:BP_MAP} are bounded, and all their cluster points are
  solutions of~\eqref{BP}.
\end{theorem}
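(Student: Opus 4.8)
The plan is to prove the two assertions — boundedness and the characterization of cluster points — for the iterate sequences $(z^k)_{k\in\NN}$ and $(x^k)_{k\in\NN}$ in $\RR^n$ (whose cluster points are the candidate solutions of \eqref{BP}), exploiting that Lemma~\ref{lemma:r_k_to_rbar} and Lemma~\ref{lemma:NormOneConvergence} already control their $\ell_1$-norms and the displacement $d^k = z^k - x^k$.

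For boundedness, I would first recall that $z^k$ lies on the boundary of $\Bcal^k$, so $\NormOne{z^k} = r_k \leq \bar r$ by Lemma~\ref{lemma:r_k_to_rbar}(i); by equivalence of norms this bounds $(z^k)_{k\in\NN}$ in the Euclidean norm, hence $(z^k)_{k\in\NN}$ is bounded. Since $x^k = z^k - d^k$ and $d^k \to 0$ by Lemma~\ref{lemma:r_k_to_rbar}(iii), the sequence $(d^k)_{k\in\NN}$ is bounded and therefore $(x^k)_{k\in\NN}$ is bounded as well (equivalently, boundedness of $(x^k)_{k\in\NN}$ follows directly from $\NormOne{x^k}\to \bar r$ in Lemma~\ref{lemma:NormOneConvergence}).

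Next, because $d^k \to 0$, the sequences $(z^k)_{k\in\NN}$ and $(x^k)_{k\in\NN}$ are asymptotically equal and thus share exactly the same set of cluster points, so it suffices to analyze a single common cluster point. Let $z^{k_j} \to \bar z$ along a subsequence; then $x^{k_j} = z^{k_j} - d^{k_j} \to \bar z$ as well. I would then read feasibility and optimality off the two sequences \emph{separately}: each $x^{k_j} \in \Mcal$ and $\Mcal$ is closed, whence $\bar z \in \Mcal$, that is $A\bar z = b$; and continuity of the $\ell_1$-norm together with $\NormOne{z^{k_j}} = r_{k_j} \to \bar r$ yields $\NormOne{\bar z} = \bar r$. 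Combining these, $\bar z$ is feasible for \eqref{BP} with objective value equal to the optimal value $\bar r$, so $\bar z \in \Scal = \Mcal \cap \Bbar$ is a solution of \eqref{BP}.

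I do not expect a genuine obstacle here, since the heavy lifting was done in Lemmas~\ref{lemma:r_k_to_rbar} and \ref{lemma:NormOneConvergence}; the only point requiring care is to extract the limiting information from the correct sequence — exact feasibility $A\bar z = b$ must come from the $x^k$, which lie in $\Mcal$ by construction, while the exact optimal norm $\NormOne{\bar z} = \bar r$ is most cleanly obtained from the $z^k$, which satisfy $\NormOne{z^k} = r_k$ exactly — and then to invoke closedness of $\Mcal$ and continuity of $\NormOne{\cdot}$ to pass to the limit. As an immediate corollary, uniqueness of the solution of \eqref{BP} upgrades \enquote{all cluster points solve \eqref{BP}} to full convergence of $(z^k)_{k\in\NN}$ and $(x^k)_{k\in\NN}$ to that unique solution, using boundedness once more.
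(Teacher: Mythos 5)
Your proposal is correct and follows essentially the same route as the paper's own proof: boundedness from Lemma~\ref{lemma:NormOneConvergence}, feasibility of cluster points from $x^k \in \Mcal$ and closedness of $\Mcal$, optimality from the convergence of the $\ell_1$-norms to $\bar r$, and the identification of the cluster points of $(z^k)_{k\in\NN}$ and $(x^k)_{k\in\NN}$ via $d^k \to 0$. The only cosmetic difference is that you anchor the argument on a cluster point of $(z^k)_{k\in\NN}$ and transfer it to $(x^k)_{k\in\NN}$, while the paper does the reverse.
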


\begin{proof}
  Since $\NormOne{z^k} = r_k \leq \bar r$ for all $k \geq 0$ by
  Lemma~\ref{lemma:NormOneConvergence} and Lemma~\ref{lemma:r_k_to_rbar}(i),
  norm equivalence in $\RR^n$ gives that $(z^k)_{k\in\NN}$ is bounded.
  Because $x^k = P_{\Mcal}(z^k)$ and projections are nonexpansive,
  $(x^k)_{k\in\NN}$ is bounded as well.

  Let $\bar x \in \RR^n$ be a cluster point of $(x^k)_{k\in\NN}$, arising
  from a subsequence $(x^{k_j})_{j\in\NN}$. Since $x^k \in \Mcal$ for all
  $k \geq 0$ and $\Mcal$ is closed, $\bar x \in \Mcal$. By
  Lemma~\ref{lemma:NormOneConvergence}, $\NormOne{x^{k_j}} \to \bar r$, so
  $\NormOne{\bar x} = \bar r$, which gives $\bar x \in \Mcal \cap \Bbar =
  \Scal$. Thus, $\bar x$ is a solution of~\eqref{BP}.

  Finally, since $d^k \to 0$ by Lemma~\ref{lemma:r_k_to_rbar}(iii) and
  $z^k = x^k + d^k$, the corresponding subsequence satisfies $z^{k_j} =
  x^{k_j} + d^{k_j} \to \bar x$. Therefore, every cluster point of
  $(z^k)_{k\in\NN}$ is also a solution of~\eqref{BP}.
\end{proof}

From now on, we assume that \eqref{BP} has a unique solution $x^\ast\in\RR^n$. In most practical  problems, \eqref{BP} features unique solutions. As we mentioned in the introduction, the uniqueness of the solution of \eqref{BP} is both standard and desirable, and has been extensively studied. With that in mind, it follows from the previous theorem  that both sequences generated by Algorithm~\ref{alg:BP_MAP} fully converge. 

\begin{corollary}\label{cor:convergence}
  Assume that \eqref{BP} has a unique solution $x^\ast \in \RR^n$. Then, the sequences
  $(z^k)_{k\in\NN}$ and $(x^k)_{k\in\NN}$ generated by Algorithm~\ref{alg:BP_MAP} converge
  to $x^\ast$.
\end{corollary}
\begin{proof}
  Immediate from Theorem~\ref{thm:convergence}: a bounded sequence, for which every cluster point equals $x^\ast$,  converges to $x^\ast$.
\end{proof}

We now turn to investigating convergence rates related to Algorithm~\ref{alg:BP_MAP}. For that, note that our algorithm is built upon geometrical characteristics involving the affine set $\Mcal$ and the optimal ball $\Bbar$. In particular, the way they touch provides an error bound condition as presented below.


\begin{proposition}\label{prop:ErrorBound}
  There exists $\omega \in (0,1]$ such that for all $z \in \Bbar$, we have
  \begin{equation}\label{eq:omega}
    \omega \dist(z, \Scal) \leq \dist(z, \Mcal).
  \end{equation}
\end{proposition}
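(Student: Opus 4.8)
The plan is to read \eqref{eq:omega} as a Hoffman-type error bound for the pair of polyhedra $\Mcal$ and $\Bbar$, crucially exploiting that $z$ is restricted to $\Bbar$. First I would write $\Bbar$ as a polyhedron through its supporting sign inequalities: $\NormOne{x}\le\bar r$ holds if and only if $\scal{s}{x}\le\bar r$ for every $s\in\{-1,1\}^n$. Collecting these rows into a matrix $S$, the solution set becomes $\Scal=\Mcal\cap\Bbar=\{x\in\RR^n\mid Ax=b,\ Sx\le\bar r\mathbf 1\}$, a nonempty polyhedron (nonempty because \eqref{BP} is solvable, by assumption).

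The heart of the argument is Hoffman's lemma applied to this representation: there is a constant $\theta>0$, depending only on $A$ and $S$, such that
\[
\dist(z,\Scal)\le\theta\Bigl(\NormTwo{Az-b}+\NormTwo{(Sz-\bar r\mathbf 1)_+}\Bigr)\qquad\text{for all }z\in\RR^n,
\]
where $(\cdot)_+$ denotes the positive part. The key simplification is that for $z\in\Bbar$ every sign inequality is satisfied, so $(Sz-\bar r\mathbf 1)_+=0$ and the second residual disappears, leaving $\dist(z,\Scal)\le\theta\NormTwo{Az-b}$ on $\Bbar$.

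It then remains to compare $\NormTwo{Az-b}$ with $\dist(z,\Mcal)$, which is elementary linear algebra. Since $A$ has full row rank, $AA^\top$ is invertible and $\dist(z,\Mcal)^2=(Az-b)^\top(AA^\top)^{-1}(Az-b)$; bounding the middle matrix below by $1/\lambda_{\max}(AA^\top)$ gives $\NormTwo{Az-b}\le\sqrt{\lambda_{\max}(AA^\top)}\,\dist(z,\Mcal)$. Combining the two estimates, $\dist(z,\Scal)\le\theta\sqrt{\lambda_{\max}(AA^\top)}\,\dist(z,\Mcal)$ for all $z\in\Bbar$, so with $\omega_0\coloneqq 1/(\theta\sqrt{\lambda_{\max}(AA^\top)})>0$ we obtain $\omega_0\dist(z,\Scal)\le\dist(z,\Mcal)$. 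Because $\Scal\subseteq\Mcal$ forces $\dist(z,\Mcal)\le\dist(z,\Scal)$, any admissible constant is at most $1$; replacing $\omega_0$ by $\omega\coloneqq\min\{\omega_0,1/2\}$ preserves the inequality (smaller $\omega$ only weakens the left-hand side) and places $\omega$ in $[0,1)$, as required. Note also there is no division-by-zero subtlety: if $z\in\Bbar$ has $\dist(z,\Mcal)=0$ then $z\in\Mcal\cap\Bbar=\Scal$, so $\dist(z,\Scal)=0$ as well.

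The main obstacle is the polyhedral error bound itself, i.e., promoting the local behavior near $\Scal$ (where both distances vanish simultaneously) to a single uniform constant; this is exactly what Hoffman's lemma delivers for a system of finitely many linear (in)equalities. An alternative that avoids the explicit, exponentially large sign matrix $S$ is to invoke the bounded linear regularity of a finite family of polyhedra with nonempty intersection: since $\Bbar$ is compact, there is $\kappa>0$ with $\dist(z,\Scal)\le\kappa\max\{\dist(z,\Mcal),\dist(z,\Bbar)\}$ for $z\in\Bbar$, and $\dist(z,\Bbar)=0$ there again collapses the bound to $\dist(z,\Scal)\le\kappa\dist(z,\Mcal)$; one then sets $\omega=\min\{1/\kappa,1/2\}$.
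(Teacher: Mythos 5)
Your proof is correct and takes essentially the same route as the paper: the paper's own proof is a one-line invocation of Hoffman's Lemma applied to the polyhedron $\Bbar$ and the affine set $\Mcal$, whose intersection $\Scal$ is nonempty. Your argument simply works out the details behind that invocation (the sign-inequality representation of $\Bbar$, the vanishing of the inequality residuals for $z \in \Bbar$, and the full-row-rank comparison of $\NormTwo{Az-b}$ with $\dist(z,\Mcal)$), so the two agree in substance.
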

\begin{proof}
  Since $\Bbar$ is a polyhedron and its intersection $\Scal = \Mcal \cap \Bbar$
  with the affine subspace $\Mcal$ is nonempty, Hoffman's
  lemma~\cite{Hoffman:1952} provides a constant $\kappa > 0$ such that
  $\dist(z, \Scal) \leq \kappa\,\dist(z, \Mcal)$ for all $z \in \Bbar$.
  Setting $\omega \coloneqq 1/\kappa$ yields~\eqref{eq:omega} with $\omega > 0$.
  The bound $\omega \leq 1$ holds because $\Scal \subset \Mcal$ implies
  $\dist(z, \Mcal) \leq \dist(z, \Scal)$ for all $z$.
\end{proof}

This error bound condition is at the core of the linear convergence results stated in sequel.

\begin{theorem}[Linear convergence of $(r_k)_{k\in\NN}$ and $({d^k})_{k\in\NN}$ ]
  \label{thm:linear_convergence_rk} 
  Let $(r_k)_{k\in\NN}$ and $({d^k})_{k\in\NN}$ be the  sequences generated by Algorithm~\ref{alg:BP_MAP} and $\omega\in (0,1]$ as in Proposition~\ref{prop:ErrorBound}. Then, 

  \begin{listi}
    \item $(r_k)_{k\in\NN}$ converges linearly to $\bar r$ with rate no worse than  $1-\omega/\sqrt{n}$;
    \item $({d^k})_{k\in\NN}$ converges linearly to $0$ with rate no worse than  $1-\omega/\sqrt{n}$.
  \end{listi}

\end{theorem}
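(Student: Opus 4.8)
The plan is to reduce both items to a single one-step estimate: a lower bound on the displacement $\NormTwo{d^k}$ in terms of the residual $\bar r - r_k$. First I would introduce the error quantity $e_k \coloneqq \bar r - r_k$, which is nonnegative and nonincreasing by Lemma~\ref{lemma:r_k_to_rbar}(i), and record the exact scalar recursion $e_{k+1} = e_k - \NormTwo{d^k}$ coming directly from the update \eqref{eq:def_r_k}. With this notation, item (i) is \emph{equivalent} to proving $e_{k+1} \le (1-\omega/\sqrt n)\,e_k$, that is, to the displacement lower bound $\NormTwo{d^k} \ge (\omega/\sqrt n)\,e_k$.

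The heart of the argument is this last inequality. Since $z^k$ lies on the boundary of $\Bcal^k$ we have $\NormOne{z^k} = r_k \le \bar r$, so $z^k \in \Bbar$ and the error bound of Proposition~\ref{prop:ErrorBound} applies: $\omega\,\dist(z^k,\Scal) \le \dist(z^k,\Mcal)$. Because $x^k = P_{\Mcal}(z^k)$, the right-hand side is exactly $\NormTwo{z^k - x^k} = \NormTwo{d^k}$. It then remains to bound $\dist(z^k,\Scal)$ from below. For any $s \in \Scal$ one has $\NormOne{s} = \bar r$, and combining norm equivalence $\NormTwo{v} \ge \NormOne{v}/\sqrt n$ with the reverse triangle inequality gives $\NormTwo{z^k - s} \ge \tfrac{1}{\sqrt n}\NormOne{z^k - s} \ge \tfrac{1}{\sqrt n}\bigl|\NormOne{s} - \NormOne{z^k}\bigr| = \tfrac{1}{\sqrt n}(\bar r - r_k)$. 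Taking the infimum over $s \in \Scal$ yields $\dist(z^k,\Scal) \ge e_k/\sqrt n$, and chaining the two estimates produces $\NormTwo{d^k} \ge (\omega/\sqrt n)\,e_k$, as desired.

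Item (i) then follows by a one-line induction: $e_{k+1} \le (1-\omega/\sqrt n)\,e_k$ forces $e_k \le (1-\omega/\sqrt n)^k e_0$ with $e_0 = \bar r$, so $r_k \to \bar r$ linearly at rate $1-\omega/\sqrt n$. For item (ii) I would exploit the exact identity $\NormTwo{d^k} = e_k - e_{k+1}$ together with $e_{k+1} \ge 0$ to obtain $\NormTwo{d^k} \le e_k \le (1-\omega/\sqrt n)^k \bar r$, which is precisely $R$-linear convergence of $(d^k)_{k\in\NN}$ to $0$ at the claimed rate.

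The only genuinely delicate point is the displacement lower bound, and within it the passage through $\Scal$: one must use that $z^k$ need not lie in the interior of the optimal ball but does satisfy $\NormOne{z^k} = r_k \le \bar r$, which is exactly what licenses Proposition~\ref{prop:ErrorBound}, and that $x^k = P_{\Mcal}(z^k)$ identifies $\NormTwo{d^k}$ with $\dist(z^k,\Mcal)$. The constant $1/\sqrt n$ is the unavoidable cost of converting the natural $\ell_1$ gap $\bar r - r_k$ into the Euclidean distance carried by the displacement. Everything else is bookkeeping on the scalar recursion for $e_k$.
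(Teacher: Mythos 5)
Your proof is correct, and it reaches the stated rate by a genuinely cleaner route than the paper's. Both arguments hinge on the same two ingredients---Proposition~\ref{prop:ErrorBound} applied at $z^k\in\Bbar$ and the norm equivalence $\NormOne{v}\le\sqrt{n}\,\NormTwo{v}$---but the paper routes them through an auxiliary construction: it takes $\bar z^k=P_{\Scal}(z^k)$, places a point $\tilde z^k$ on the segment $[z^k,\bar z^k]$ at the boundary of $\Bcal_1(0,r_{k+1})$, and estimates $\bar r-r_{k+1}\le\NormOne{\tilde z^k-\bar z^k}$. That route tacitly uses identities such as $\NormOne{z^k-\bar z^k}=\bar r-r_k$ and additivity of the $\ell_1$-norm along the segment, which in general hold only as inequalities (the reverse triangle inequality gives $\NormOne{z^k-\bar z^k}\ge\bar r-r_k$, the direction opposite to what the paper's final step needs). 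You dispense with $\tilde z^k$ and with $P_{\Scal}$ altogether: you prove the displacement lower bound $\NormTwo{d^k}\ge(\omega/\sqrt{n})(\bar r-r_k)$ by combining the error bound with $\dist(z^k,\Scal)\ge(\bar r-r_k)/\sqrt{n}$, the latter obtained by applying the reverse triangle inequality to \emph{every} $s\in\Scal$ and taking an infimum (so no equality is ever required), and then feed it into the exact recursion $e_{k+1}=e_k-\NormTwo{d^k}$ for $e_k=\bar r-r_k$, which is just \eqref{eq:def_r_k} rewritten. This yields item (i) with the same constant, through steps that are all valid.

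The one substantive divergence is item (ii). The paper asserts the Q-linear estimate $\NormTwo{d^{k+1}}\le(1-\omega/\sqrt{n})\NormTwo{d^k}$ by ``subtracting'' \eqref{eq:linear_convergence_rk} from \eqref{eq:linear_convergence_rk+1}; subtracting two inequalities that point in the same direction is not a legitimate operation, so that derivation does not stand as written. Your argument instead gives the R-linear bound $\NormTwo{d^k}=e_k-e_{k+1}\le e_k\le(1-\omega/\sqrt{n})^k\,\bar r$. This is formally weaker than a Q-linear rate, but it is what the available estimates genuinely support, and it fulfills the theorem's claim that $(d^k)_{k\in\NN}$ converges linearly to $0$ with rate no worse than $1-\omega/\sqrt{n}$. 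In short: same key lemma and same constant, but your decomposition avoids the unjustified identities in the paper's proof of item (i) and replaces the invalid subtraction in item (ii) with a correct, if R-linear, conclusion.
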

\begin{proof}
Let $\bar z^k \coloneqq P_{\Scal}(z^k)$. By Lemma~\ref{lemma:NormOneConvergence},
$\NormOne{z^k} = r_k \leq \bar r$, so $z^k \in \Bbar$ and
Proposition~\ref{prop:ErrorBound} gives
\begin{equation}\label{eq:dist_z^k_S_EB}
  \omega\,\NormTwo{z^k - \bar z^k} \leq \dist(z^k, \Mcal) = \NormTwo{d^k}.
\end{equation}
By norm equivalence and the reverse triangle inequality,
\[
  \NormTwo{z^k - \bar z^k}
  \geq \frac{\NormOne{z^k - \bar z^k}}{\sqrt{n}}
  \geq \frac{\abs{\NormOne{z^k} - \NormOne{\bar z^k}}}{\sqrt{n}}
  = \frac{\bar r - r_k}{\sqrt{n}},
\]
where the last equality uses $\NormOne{z^k} = r_k$ and $\NormOne{\bar z^k} =
\bar r$ (since $\bar z^k \in \Scal \subset \partial\Bbar$). Combining
with~\eqref{eq:dist_z^k_S_EB},
\[
  \NormTwo{d^k} \geq \frac{\omega(\bar r - r_k)}{\sqrt{n}}.
\]
Since $r_{k+1} - r_k = \NormTwo{d^k}$ by~\eqref{eq:def_r_k}, we get
\begin{equation}\label{eq:linear_convergence_rk}
  \bar r - r_{k+1}
  = (\bar r - r_k) - \NormTwo{d^k}
  \leq \left(1 - \frac{\omega}{\sqrt{n}}\right)(\bar r - r_k),
\end{equation}
which is item~(i). For item~(ii), since $r_{k+1} \leq \bar r$ by item~(i), we have
\[
  \NormTwo{d^k} = r_{k+1} - r_k \leq \bar r - r_k.
\]
Applying~\eqref{eq:linear_convergence_rk} inductively gives
$\bar r - r_k \leq \rho^k(\bar r - r_0) = \rho^k \bar r$,
and therefore
\[
  \NormTwo{d^k} \leq \rho^k \bar r,
\]
which tends to $0$ linearly with rate $\rho$. Therefore, the theorem holds.
\end{proof}

\begin{theorem}\label{thm:linear_convergence}
  Assume that~\eqref{BP} has a unique solution $x^\ast \in \RR^n$, and let
  $\rho \coloneqq 1 - \omega/\sqrt{n}$, with $\omega \in (0,1]$ as in
  Proposition~\ref{prop:ErrorBound}. Then, the sequences $(z^k)_{k\in\NN}$
  and $(x^k)_{k\in\NN}$ generated by Algorithm~\ref{alg:BP_MAP} converge
  $R$-linearly to $x^\ast$ with rate no worse than $\rho$. More precisely, for
  all $k \geq 0$,
  \begin{equation}\label{eq:linear_rate_zk_xk}
    \NormTwo{z^k - x^\ast} \leq \frac{\bar r}{\omega}\,\rho^k
    \qquad\text{and}\qquad
    \NormTwo{x^k - x^\ast} \leq \frac{\bar r}{\omega}\,\rho^k.
  \end{equation}
\end{theorem}

\begin{proof}
  We first argue that $z^k \in \Bbar$ for all $k \geq 0$. Since
  $(z^k, x^k) \in \bap(\Bcal^k, \Mcal)$, the point $z^k$ must lie on the
  boundary of $\Bcal^k$: if it were in its interior, one could move $z^k$ toward $x^k$
  and strictly decrease the distance to $\Mcal$, contradicting optimality.
  Thus, $\NormOne{z^k} = r_k$, and Lemma~\ref{lemma:r_k_to_rbar}(i) gives
  $r_k \leq \bar r$, so $z^k \in \Bbar$.

  Since $\Scal = \{x^\ast\}$, we can apply Proposition~\ref{prop:ErrorBound}
  to $z^k \in \Bbar$ to get
  \begin{equation}\label{eq:EB_unique}
    \omega\,\NormTwo{z^k - x^\ast}
    = \omega\,\dist(z^k, \Scal)
    \leq \dist(z^k, \Mcal)
    = \NormTwo{z^k - x^k}
    = \NormTwo{d^k},
  \end{equation}
  where the last two equalities use that $x^k = P_{\Mcal}(z^k)$ is the
  nearest point in $\Mcal$ to $z^k$; see~\eqref{eq:def_x_k}. Dividing through by $\omega > 0$ and applying
  Theorem~\ref{thm:linear_convergence_rk}(ii), which gives
  $\NormTwo{d^k} \leq \rho^k \bar r$, we obtain
  \[
    \NormTwo{z^k - x^\ast} \leq \frac{\NormTwo{d^k}}{\omega}
    \leq \frac{\bar r}{\omega}\,\rho^k.
  \]
  The bound for $(x^k)_{k\in\NN}$ follows immediately: since $x^\ast \in \Mcal$,
  we have $x^\ast = P_{\Mcal}(x^\ast)$, and nonexpansiveness of $P_{\Mcal}$
  gives
  \[
    \NormTwo{x^k - x^\ast}
    = \NormTwo{P_{\Mcal}(z^k) - P_{\Mcal}(x^\ast)}
    \leq \NormTwo{z^k - x^\ast}
    \leq \frac{\bar r}{\omega}\,\rho^k.
  \]
\end{proof}

To summarize: the radii $(r_k)_{k\in\NN}$ converge linearly to the optimal
value $\bar r$, and both $(z^k)_{k\in\NN}$ and $(x^k)_{k\in\NN}$ converge
linearly to the unique solution $x^\ast$ when~\eqref{BP} has a unique
solution. In the non-unique case, all cluster points are solutions
of~\eqref{BP}, and full convergence remains open.
 
Unlike a naive scheme that simply moves from one best approximation pair to
the next, BP-MAP uses MAP for its inner iterations, which carries Fejér
monotonicity properties; see~\cite[Chap.~5]{Bauschke:2017a}. We note
that none of the convergence results above used any property of MAP beyond
convergence to best pairs — so any inner method seeking best pairs could replace MAP in Step~\ref{alg:BP_MAP_stepMAP} with the same theoretical
guarantees. However, leaping from one best pair to another without Fejér structure is not enough for convergence; simple counterexamples exist in~$\RR^3$. Our conjecture is that $(x^k)_{k\in\NN}$ is quasi-Fejér in the sense of~\cite{Combettes:2001a,Behling:2024d}, and hence convergent regardless of uniqueness. A proof is left for future work.


\section{Numerical experiments}\label{sec:numerical}

In this section, we present numerical results for Algorithm~\ref{alg:BP_MAP}. It was implemented as a package in the \texttt{Julia} language~\cite{Bezanson:2017}; the code is fully available at \url{https://github.com/lrsantos11/BP_MAP}. We denote this implementation as \BPMAP. In this implementation, the projection onto the affine space $\Mcal$ is performed using the {\tt ProximalOperators.jl} package~\cite{Stella:2025} if the data matrix $A$ is dense and the Conjugate Gradient implementation from {\tt Krylov.jl}~\cite{Montoison:2023} if $A$ is sparse. For the projection onto the $\ell_1$-ball, we used an implementation of the Newton semismooth method described in~\cite{Cominetti:2014}, with modifications to incorporate ideas from~\cite{Condat:2016} in the initial iteration and to leverage multiple threads~\cite{Secchin2026}.

We compare our implementation with two state-of-the-art open-source solvers that implement the best approaches for \eqref{BP}, as described in~\cite{Lorenz:2015}. In this paper, the authors test several solvers for \eqref{BP}, and the best results are obtained using a dual Simplex implementation for a Linear Programming (LP) model representing \eqref{BP} and an infeasible-point subgradient algorithm from~\cite{Lorenz:2014}. We limit ourselves to open-source implementations to avoid comparing against very specialized codes that depend on ample resources available in commercial development.

For the dual simplex approach, we use the \texttt{HiGHS} solver, a well-regarded and high-performance code for LP~\cite{Huangfu:2018a}. It is used to solve a natural LP problem that models \eqref{BP}~\cite{Candes:2005}. Indeed, the following model is equivalent to \eqref{BP}:
\begin{equation}
\label{eq:LP}
\min_{x^+, x^-} \mathds{1}^{\top} x^{+}+\mathds{1}^{\top} x^{-} \text { s.t. } A x^{+}-A x^{-}=b,\quad x^{+} \geq 0,\ x^{-} \geq 0.
\end{equation}
Here, $A \in \RR^{m \times n}$ and the vector $b \in \RR^{m}$ are the problem data and $\mathds{1} \in \RR^n$ is a vector of ones. The uses the standard split of free variables $x = x^{+}-x^{-}$, where $x^{+}\coloneqq\max \{0, x\}$ and $x^{-}\coloneqq\max \{0,-x\}$ due to minimization. \texttt{HiGHS} is well suited to solve \eqref{eq:LP}. The solver is written in C++ and has easy bindings in \texttt{Julia}. The solution of \eqref{BP} using HIGHS to solve~\eqref{eq:LP} will be referred to as \texttt{HiGHS-BP}. 

The second solver is called \texttt{ISAL1}. It is based on the infeasible-point subgradient algorithm proposed in~\cite{Lorenz:2014} for general convex optimization problems that was adapted in \cite{Lorenz:2015} to solve \eqref{BP}. An iteration of \texttt{ISAL1} is given by
\begin{equation}\label{eq:ISAL1}
x^{k+1}\coloneqq P_\Mcal\left(x^k-\lambda_k \frac{\NormOne{x^k}-\varphi}{\NormTwo{h^k}^2} h^k\right)
\end{equation}
where ${P}_\Mcal$ is the projection onto the affine manifold $\Mcal$, $\left(\lambda_k\right)$ are vanishing positive step-size parameters, $\varphi$ is a lower bound on the optimal objective function value, and $h^k \in \partial\left\|x^k\right\|_1$ is a subgradient of the $\ell_1$-norm at the current point $x^k$. \texttt{ISAL1} is implemented in \texttt{Matlab} and is freely available. We use a \texttt{Matlab} interface to call this solver from \texttt{Julia}.

We consider two different test sets. The first is a collection of synthetic \eqref{BP} instances described in \cite{Lorenz:2013}. We use these instances for an initial assessment of our algorithm's performance and to establish variants that accelerate its numerical efficiency. After that, we compare the best method to the two solvers mentioned above. The second set of instances is adapted from real-world LASSO~\cite{Tibshirani:1996} instances compiled in~\cite{Lopes:2019}. Again, we benchmark the best \BPMAP variant and the two other solvers in this set of tests.

\subsection{Variants of \BPMAP}\label{sec:variants}

Before presenting the numerical results, we outline two variants of \BPMAP (Algorithm~\ref{alg:BP_MAP}) that aim to improve its performance.

\subsubsection*{\BPMAPhoc}

Lorenz, Pftesch and Tillmann~\cite[Sec. 2]{Lorenz:2015} proposed an optimality check based on a heuristic that tries to compute an optimal point based on the sign structure of the current iterate. The rationale is that most algorithms produce an infinite sequence converging to a solution; however, due to the geometry of \eqref{BP}, the correct sign structure may be identified early. Using this routine within a solver, once the correct sign structure is achieved, it can recover a solution. For completeness, we present the heuristic below.

\begin{algorithm}[H]\small
  \begin{algorithmic}[1]
  \Require {matrix $A$, right hand side vector $b \neq 0$, vector $x$} 
  \State Deduce candidate for support set  $\supportS \coloneqq \supportS(x)$ of $x$.
  \State compute approximate solution $\hat w$ to $A_\supportS^\top w = \sign(x_\supportS)$
  \If{$\NormInf{A\top \hat w} \approx 1$ }
    \If{$\hat{x}$ exists with $A_\supportS \hat{x}_\supportS=b$ and $\hat{x}_i=0,  \forall i \notin \supportS$}  
      \If{ $\left(\|\hat{x}\|_1+b^{\top}(-\hat{w})\right) /\|\hat{x}\|_1 \approx 0$}
        \State \Return \texttt{success} ($\hat{x}$ is an optimal solution)
      \EndIf
    \EndIf
  \EndIf
  \State \Return \texttt{failure} (no optimal solution found)
\end{algorithmic}
  \caption{{Heuristic Optimality Check (HOC)~\cite[Alg.~2]{Lorenz:2015}}}\label{alg:HOC}  
\end{algorithm}

As explained by \cite{Lorenz:2015}, HOC can be employed as part of several $\ell_1$ solvers, if the solvers approach a solution asymptotically. This is the case of \texttt{ISAL1}, and the version we are comparing to our implementation. It uses HOC. The results of ISAL1 with HOC are much better than those without it. 

In our case, we adapted HOC for use within Algorithm~\ref{alg:BP_MAP}. The idea is to call HOC when the support set of the current iterate is the same as the previous one. This indicates that the sign structure is stabilizing. We call this variant \BPMAPhoc. We observe that the HOC routine is called after Step \ref{alg:BP_MAP_stepProj} of Algorithm~\ref{alg:BP_MAP}. The two linear systems present in the HOC description are solved using Julia's backslash operator, if $A$ is dense, and the {\tt cgls} function from {\tt Krylov.jl}, if $A$ is sparse. 

Before presenting the first numerical results, we describe some implementation
details below. 
\begin{enumerate}
  \item All code was run in a computing node with two AMD EPYC 9255 24-Core CPUs, 1.5TB of RAM, using Debian GNU/Linux 6.1. The maximum number of threads available was 24; hence, at most one of the CPUs was employed in the tests. 
  \item When calling MAP for $\Bcal^k \cap \Mcal$, the method assumes that a best approximation pair was found as a proof of infeasibility if the Euclidean distance between two successive projection pairs does not show a relative improvement of at least $\num{e-6}$.
  \item When calling MAP for $\Bcal^k \cap \Mcal$, an approximate point in the intersection is considered found whenever the infinite distance between a projection onto $\Bcal^k$ and the projection onto $\Mcal$ is smaller than $\num{e-6}$ in relative or absolute values.
  \item In the \BPMAP method, an approximate solution is declared found whenever the MAP call returns that it found an approximate point in the intersection of $\Bcal^k \cap \Mcal$.
  \item The {\tt cg} function from {\tt Krylov.jl} is called with its default options when used to project onto $\Mcal$. It uses the square root of the machine epsilon as a precision threshold. As we are using double precision, this value is approximately $\num{1.49e-8}$.
  \item In HOC, when {\tt cgls} is used to solve the linear systems, it is also called with the default options. Hence, it uses $\num{1.49e-8}$ as both relative and absolute precision thresholds.
  \item In line 3 of HOC, the equalities are checked using absolute precision and $\num{e-6}$ as threshold.
  \item All solvers had a limit of one hour to solve a problem. After that, it should declare failure.
\end{enumerate}

Figure~\ref{fig:bpmapvsbphoc} shows the performance profile~\cite{Dolan:2002} comparing these two variants in the Lorenz; see a detailed description below. It is clear from this experiment that using HOC greatly accelerates the convergence of \BPMAP and, hence, it will be adopted as the default from now on.
\begin{figure}[htbp]
  \centering
  \includegraphics[width=0.75\textwidth]{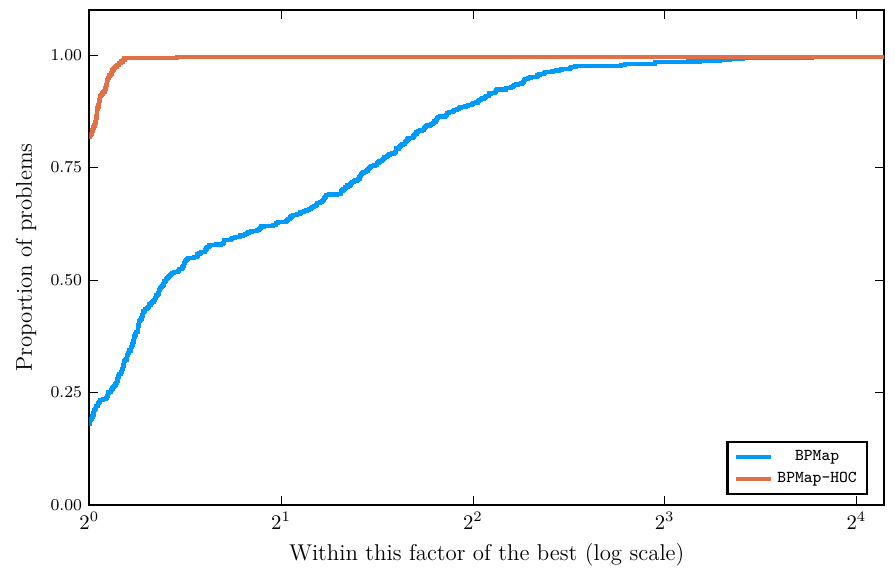} 
  \caption{Performance profile comparing \BPMAP and \BPMAPhoc in the Lorenz test set.}
  \label{fig:bpmapvsbphoc}
\end{figure}

\subsubsection*{\BPMAPbin}

The second variant we propose is a binary search to update the radius $r_k$ in Algorithm~\ref{alg:BP_MAP}. This Algorithm computes $\NormOne{ z^k} = r_k \leq \bar{r} \leq \NormOne{x^k}$, with $r_k$ and $\NormTwo{x^k}$ converging linearly to the radius of the optimal $\ell_1$-ball $\bar{r}$. The fact that $r_k$ is always a lower estimate of the correct radius is important for the algorithm's efficiency, as the MAP algorithm will be applied to an infeasible problem, where MAP typically converges fast. However, the linear convergence rate depends on the problem data and can be slow.

An alternative approach to approximating $\bar{r}$ with a guaranteed linear rate is to use binary search, as described in Algorithm~\ref{alg:BPMAPbin}. Clearly, $R_k - r_k \rightarrow 0$ linearly with rate $\min(\alpha, 1 - \alpha)$. The same holds for the convergence of $r_k$ and $R_k$ to the optimal radius $\bar{r}$.

\begin{algorithm}[H]\small
  \begin{algorithmic}[1]
  \Require $r_0 \coloneqq 0 \leq R_0 \coloneqq \| P_\Mcal (0) \|_1$, $\alpha \in (0, 1)$.
  \For{$k= 0,1,2, \ldots$}
     \State Define $\Rcal^\alpha_k \coloneq \alpha r_k + (1 - \alpha) R_k$.  
     \State Use MAP do determine if $\Mcal \cap \Bcal_1(0, \Rcal^\alpha_k)$ is empty.
     \If{$\Mcal \cap \Bcal_1(0, \Rcal^\alpha_k) = \emptyset$}
       \State $r_{k + 1} \coloneq \Rcal^\alpha_k$.
     \Else
       \State $R_{k + 1} \coloneq \Rcal^\alpha_k$.
     \EndIf
  \EndFor 
  \end{algorithmic}
  \caption{\BPMAPbin\ -- Binary search of \BPMAP\ for Basis Pursuit}
  \label{alg:BPMAPbin}  
\end{algorithm}

In practice, the implementation of \BPMAPbin is just a slight variant of \BPMAP. It can also profit from using the HOC heuristic from the previous section. We will refer to the HOC variant of \BPMAPbin as \BPhocbin. A fundamental parameter in the algorithm is the value of $\alpha$. While we performed the first tests, it was clear that the natural value of $\frac{1}{2}$ is not ideal. With this $\alpha$, in many cases, $\Mcal \cap \Bcal_1(0, \Rcal^\alpha_k) \neq \emptyset$ and the MAP algorithm would take too long to find a point in the intersection. We then settled on using $\alpha = 0.9$, favoring empty intersections. Moreover, the implementation would declare success whenever the absolute or relative difference between $r_k$ and $R_k$ drops below $\num{e-6}$.

Figure~\ref{fig:bphocvsbphocbin} shows the performance profile of the HOC variants of \BPMAP and \BPMAPbin. The binary variant is faster. Hence, we will use this version in the comparison with the other methods below. 

\begin{figure}[htbp]
\centering
\includegraphics[width=0.75\textwidth]{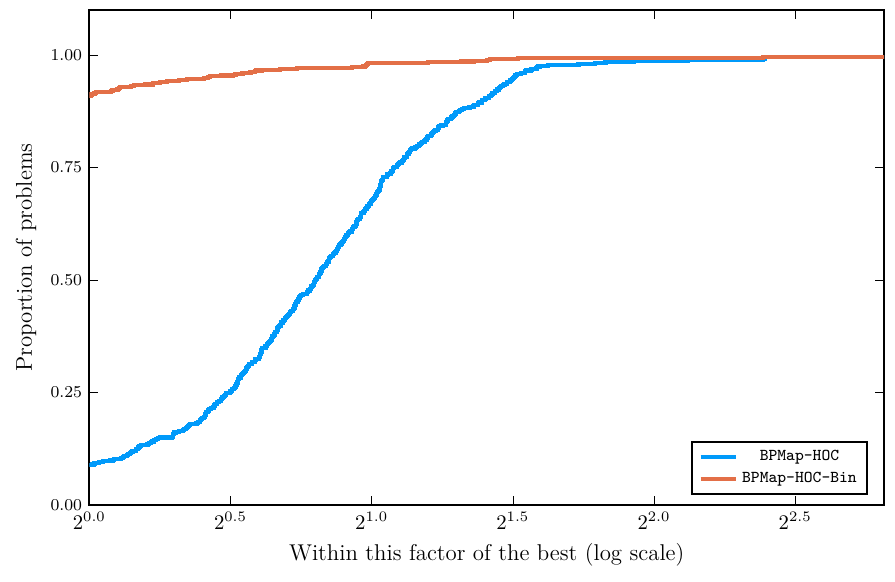} 
\caption{Performance profile comparing \BPMAPhoc and \BPhocbin in the Lorenz test set.}
\label{fig:bphocvsbphocbin}
\end{figure}

\subsection{Comparison to other methods}

\subsubsection*{Synthetic instances -- The Lorenz test set}
\label{sec:synthetic}

The first part of our experiments considers a set of synthetic instances specifically designed to test algorithms for Basis Pursuit. These instances were introduced in~\cite{Lorenz:2013} and used in~\cite{Lorenz:2015} to conduct an extensive comparison of several state-of-the-art solvers for \eqref{BP}. They are fully available at \url{http://wwwopt.mathematik.tu-darmstadt.de/spear/software/L1_Comparison/SPEAR_L1_Testset_mat.zip}; see \cite[Sec.~4]{Lorenz:2015} for further details.

The test set contains a total of \num{548} instances, generated from \num{100} explicitly defined matrices \( A \in \RR^{m \times n} \). It covers a diverse range of constructions, including random matrices (Gaussian, binary, and ternary) and structured transforms (Hadamard, Haar, sinusoidal, and convolution), as well as their concatenations. The matrices include both sparse and dense examples: sparse matrices are those with \( m \geq \num{2048} \), while matrices with \( m \leq \num{1024} \) are dense. All matrix columns are normalized to have unit Euclidean norm and ensured to be unique.

Each matrix is paired with \num{4} to \num{6} right-hand side vectors \( b \coloneqq Ax \), generated from sparse vectors \( x \) with either high or low dynamic range. For \num{400} of the instances, the support of \( x \) satisfies the Exact Recovery Condition (ERC) \cite{Tropp:2004}, guaranteeing uniqueness of the solution to \eqref{BP}. The remaining \num{148} instances were constructed using dual certificate techniques to ensure uniqueness, even if the ERC fails.

The problem sizes vary with \( m \in \{\)\num{512}, \num{1024}, \num{2048}, \num{8192}\(\} \) and \( n \in \{\)\num{1024}, \num{1536}, \num{2048}, \num{3072}, \num{4096}, \num{8192}, \num{12288}, \num{16384}\(\} \), and some matrices are formed by concatenating multiple blocks of different types. The sparsity level (i.e., number of nonzeros in \( x \)) increases with \( m \), and for larger instances, the variation in support sizes is more pronounced.


Figure~\ref{fig:lorenzfull} shows the performance profile comparing \BPhocbin with {\tt ISAL1} and the LP formulation solved using {\tt HiGHS}. As one can see, \BPhocbin comes out as a clear winner against {\tt ISAL1}. It also surprisingly outperforms {\tt HiGHS}. However, in this case, we see a fast increase in the {\tt HiGHS} profile starting at $2^4$. This suggests that {\tt HiGHS} is performing well in a subgroup of the problems.

\begin{figure}[htbp]
\centering
\includegraphics[width=0.475\textwidth]{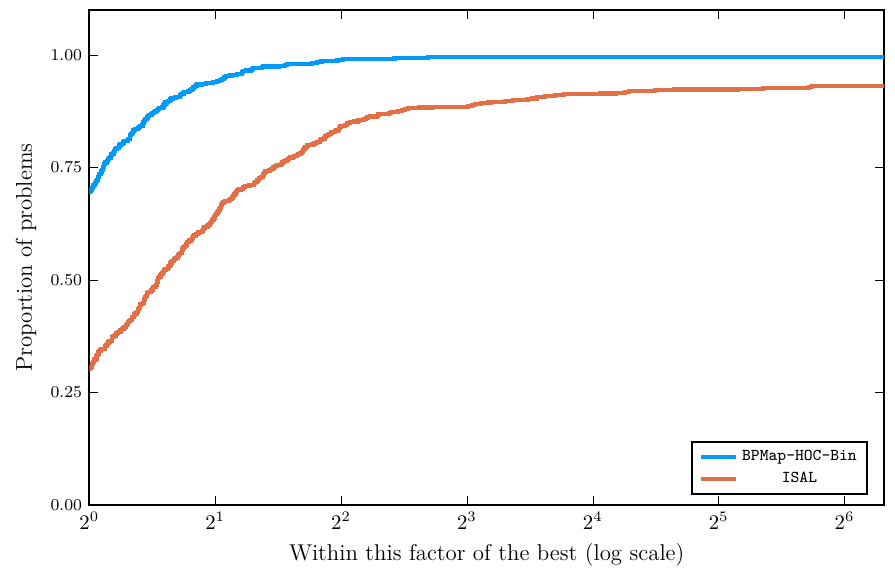}
\includegraphics[width=0.475\textwidth]{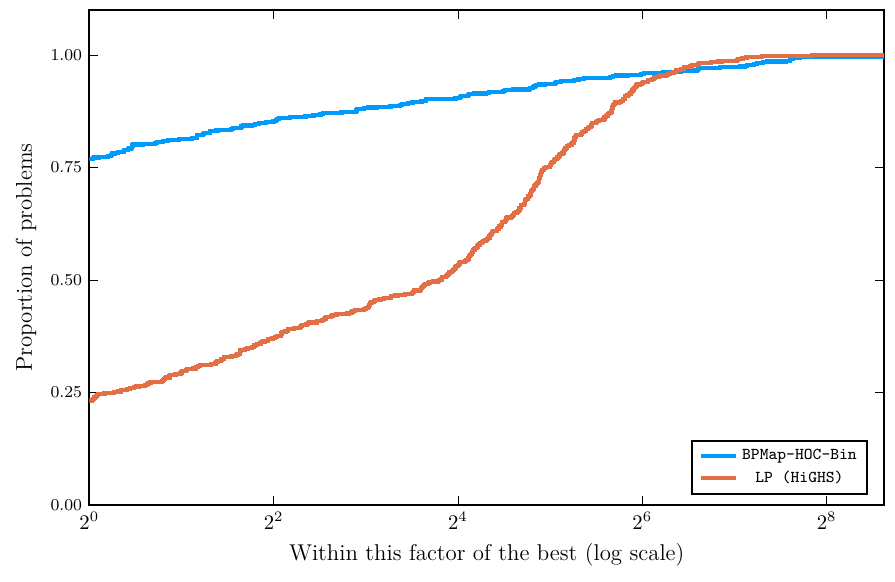}
\caption{Performance profile comparing \BPhocbin with {\tt ISAL1} and the LP
formulation solved with {\tt HiGHS} in full the Lorenz test set.}
\label{fig:lorenzfull}
\end{figure}

After some investigation, we have found that {\tt HiGHS} performs better for large problems, particularly when the matrices are sparse. We can see this in the performance profiles from Figure~\ref{fig:lorenzsizes}. In this case, we can confirm the good result when comparing \BPhocbin and {\tt ISAL1}, but there is a complete reversal between \BPhocbin and the LP model solved by {\tt HiGHS}. This result is somewhat in line with~\cite{Lorenz:2015}, where the LP implementations were also significantly faster for larger sizes. This suggests that the sparse linear algebra code is much more effective than the direct implementations in \BPhocbin and {\tt ISAL1}. In particular, HiGHS and other well-established LP codes use an extensive presolve phase that modifies the original problem formulation. In the context of \BPhocbin, this could probably be achieved by seeking suitable preconditioners for the linear systems. However, for iterative solvers like CG, the right preconditioner usually depends on each problem, as we did not find a simple one that works well across the whole test set. 

\begin{figure}[htbp]
\centering
\includegraphics[width=0.475\textwidth]{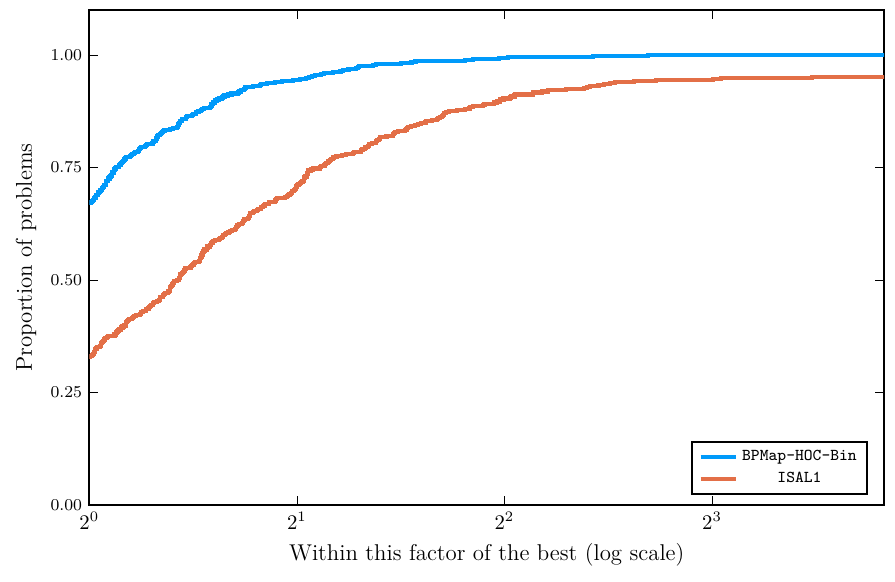}
\includegraphics[width=0.475\textwidth]{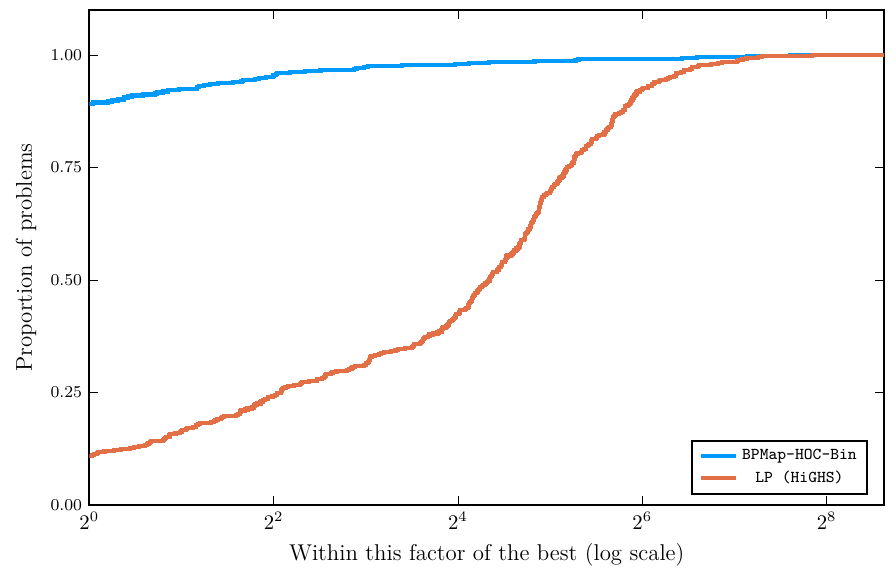} \\
\includegraphics[width=0.475\textwidth]{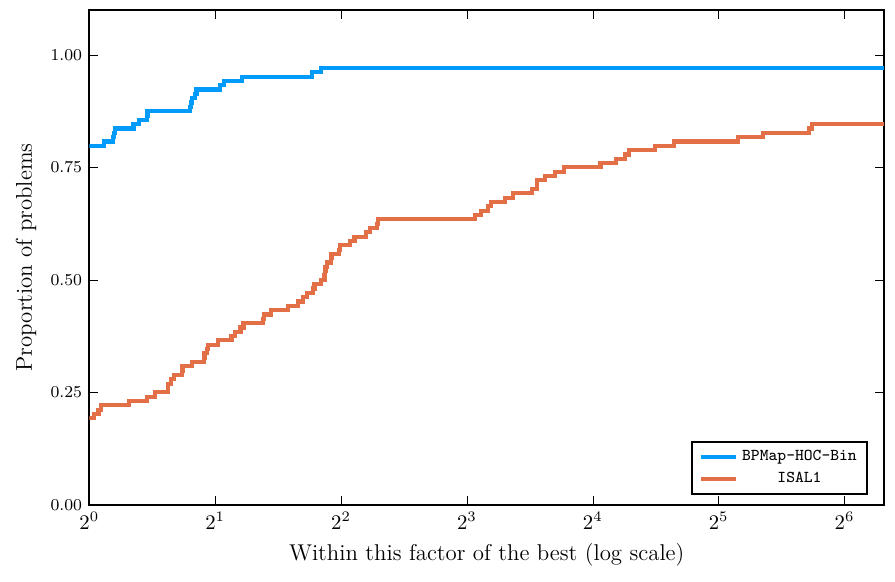}
\includegraphics[width=0.475\textwidth]{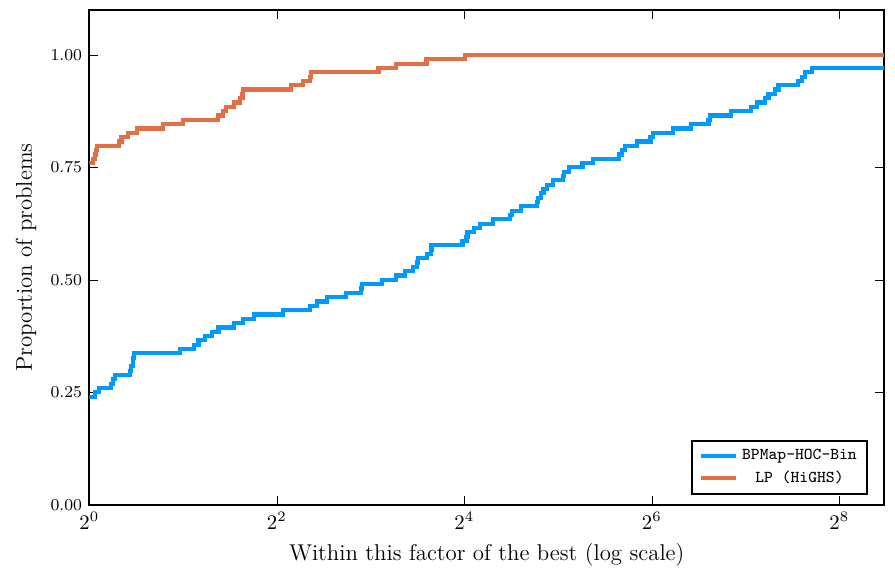} 
\caption{Performance profile comparing \BPhocbin with {\tt ISAL1} and the LP
formulation solved with {\tt HiGHS} the Lorenz test set. The top line show the
comparison for problems where at least one of the matrix dimensions is smaller
than $2048$. The second line is the results when both matrix dimensions are $2048$ or more.}
\label{fig:lorenzsizes}
\end{figure}

\subsubsection*{Real-world instances -- LSS test set}
\label{sec:realworld}

A second test set was based on a collection of problems compiled in~\cite{Lopes:2019}. This collection is particularly interesting because the data originate from real classification or regression applications, rather than being randomly generated. Since BP performs feature selection, we focused on problems that have more features (columns) than samples (lines), and hence we focused on the SC test set described in~\cite[Table 3]{Lopes:2019}. For each of these problems, we created four right-hand sides (rhs), vectors $b$, associated to solutions with sparsity levels $1\%,\ 5\%,\ 10\%$, and $20\%$. To do this, a full set of LASSO solutions for varying regularization parameters using the {\tt fit} function from {\tt LASSO.jl}~\cite{LASSOjl:2025} that implements the {\tt glmnet} algorithm~\cite{Friedman:2010}. The right-hand sides are then obtained by multiplying the data matrix $A$ by the solution that has the sparsity level closest to the target value. 

To avoid ill-conditioned problems, we have also attempted to solve the linear system $Ax = b_5$, where $b_5$ is the computed right-hand side with the sparsity level of approximately $5\%$. A problem is considered badly conditioned if one of the following conditions holds:
\begin{enumerate}
  \item If it is possible to factorize a dense version of matrix $A$ in memory and the residual of the solution obtained using the standard backslash Julia operator is greater than $\num{e-10}$.
  \item If $A$ is too large to be factorized and the {\tt cgne} function from {\tt Krylov.jl} function declares failure when trying to solve the system. 
\end{enumerate}
The remaining problems, used in all the tests below, are described in Table~\ref{tab:sctests}. All matrices are stored in sparse format.

\begin{table}[htbp]  \centering
    \caption{Problems used from LSS test set}
  \label{tab:sctests} 
  \begin{tabular}{llr}
    \toprule                                          
    Label & Problem name             & $(n_{row}, n_{col})$ \\ 
    \midrule
    SC1   & peppers05-6-6    & (32768, 65536)       \\
    SC2   & peppers05-12-12  & (32768, 65536)i      \\
    SC3   & peppers025-12-12 & (16384, 65536)       \\
    SC6   & SparcoProblem603 & (1024, 4096)         \\
    SC7   & finance1000      & (30465, 216842)      \\
    SC8   & dbworld-bodies   & (64, 4702)           \\
    SC9   & dexter-train     & (300, 7751)          \\
    SC10  & dexter-valid     & (300, 7847)          \\
    SC11  & dorothea-train   & (800, 88119)         \\
    SC12  & dorothea-valid   & (350, 72113)         \\
    SC13  & news20-binary    & (19996, 1355191)     \\
    SC14  & news20-scale     & (15935, 60346)       \\
    SC15  & news20-t-scale   & (3993, 39128)        \\
    SC17  & rcv1-train-mult  & (15564, 36842)       \\
    SC19  & sector-t-scale   & (3207, 39234)        \\
    SC21  & farm-ads-vect    & (4143, 54877)        \\
    SC22  & mug05-12-12      & (12410, 24820)       \\
    SC23  & mug025-12-12     & (6205, 24820)        \\
    SC24  & mug075-12-12     & (13651, 24820)       \\
    \bottomrule
  \end{tabular}
\end{table}

Figure~\ref{fig:LSSfull} shows the performance profiles comparing \BPhocbin with {\tt ISAL1} and the {\tt HiGHS} implementation. Again, we can see that \BPhocbin outperforms {\tt ISAL1}. But the comparison with {\tt HiGHS} is less clear. \BPhocbin starts slower, but, at the end, it shows to be more robust, solving more problems than {\tt HiGHS} in the one hour budget. 

\begin{figure}[htbp]
\centering
\includegraphics[width=0.475\textwidth]{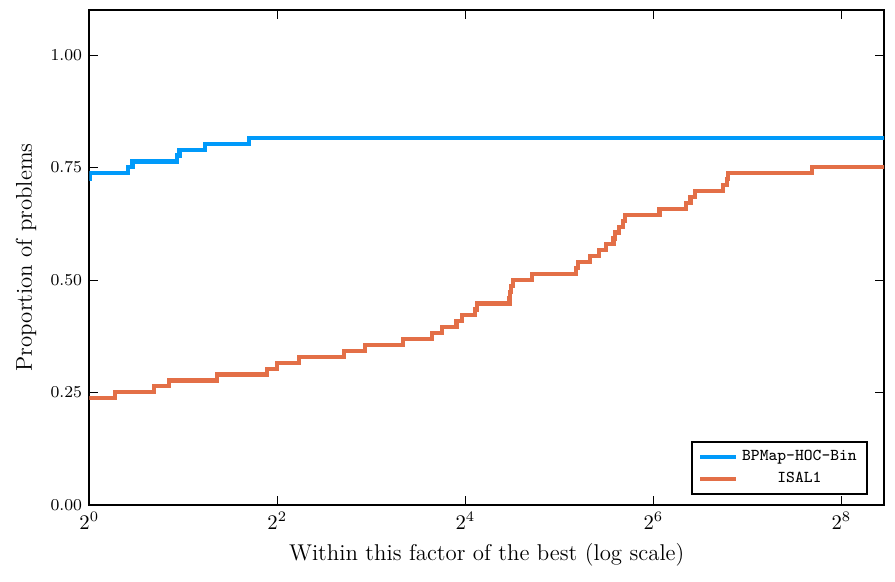}
\includegraphics[width=0.475\textwidth]{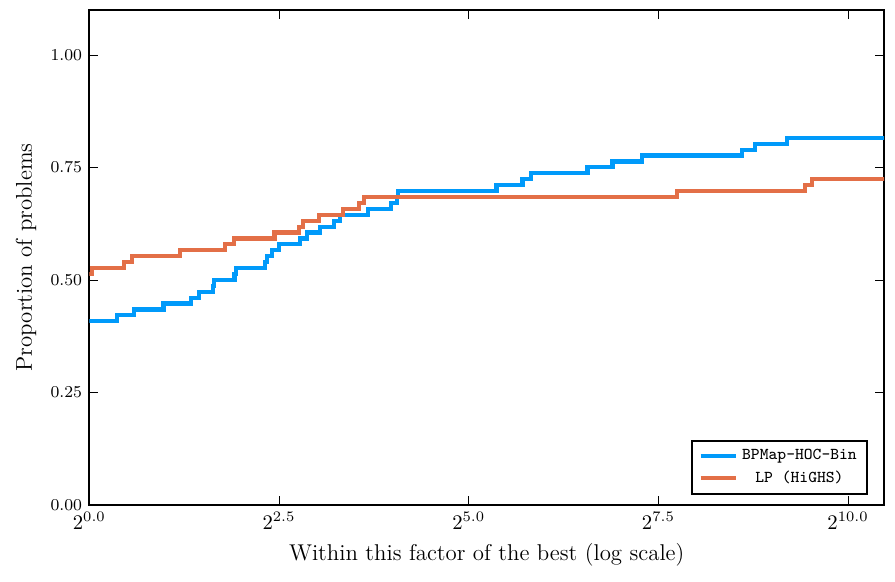}
\caption{Performance profile comparing \BPhocbin with {\tt ISAL1} and the LP
formulation solved with {\tt HiGHS} in full the LSS test set.}
\label{fig:LSSfull}
\end{figure}

When trying to understand these results, we observed that some of the LSS problems have a small number of rows. Hence, they are well-suited for a dual simplex approach, which is implemented in HiGHS. This is particularly relevant, as \BPhocbin is solving the linear system using Krylov iterative methods. We then decided to focus on problems with at least 1,000 rows and 10,000 columns, which can be considered medium to large in size. In this case, \BPhocbin can also benefit from moving the solution of the linear systems to a GPU to compensate for the advantages that {\tt HiGHS} showed on the sparse problems in the Lorenz test set. The computer node is equipped with 2 NVidia L40S GPUs. Only one GPU was used in the tests.  

Figure~\ref{fig:LSSlarge} shows the results. Now we see that \BPhocbin outperforms the solver using {\tt HiGHS}, in particular showing to be substantially more robust.  

\begin{figure}[htbp]
\centering
\includegraphics[width=0.475\textwidth]{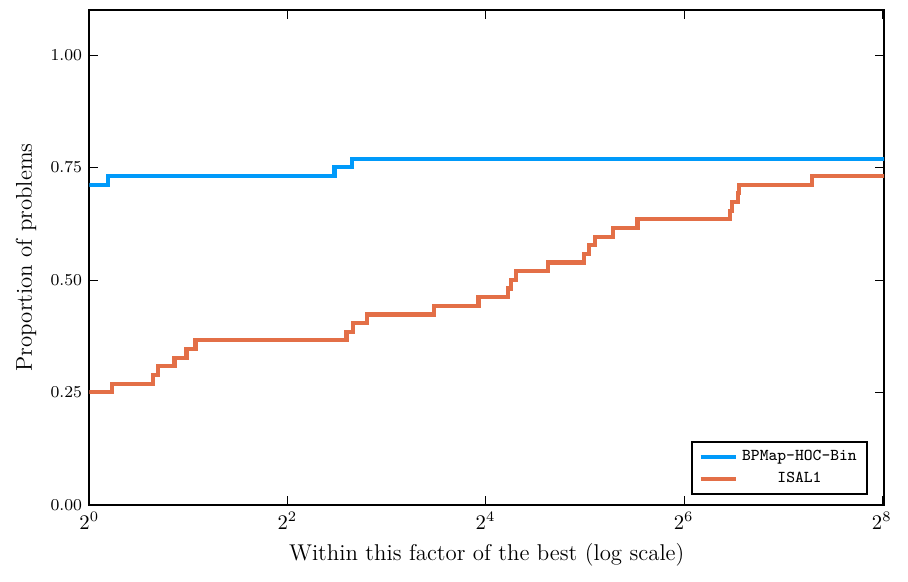}
\includegraphics[width=0.475\textwidth]{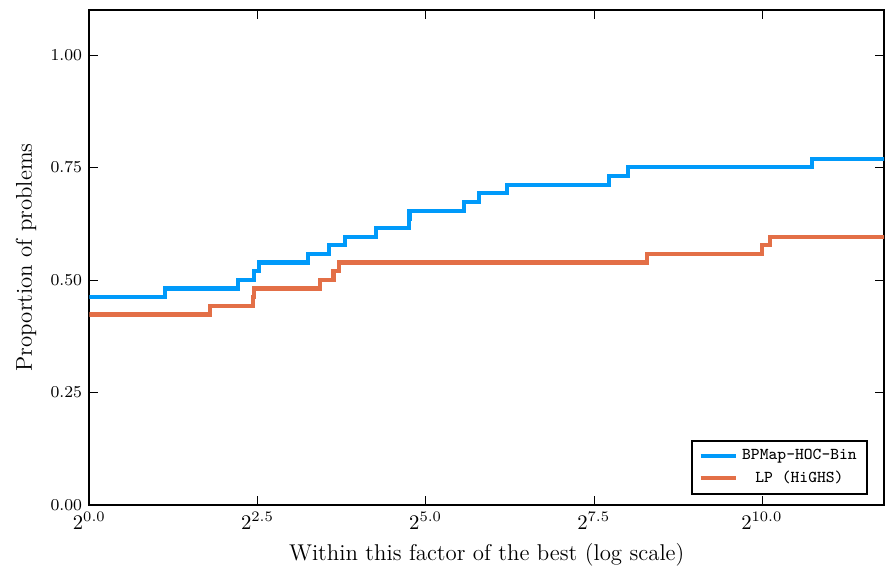}
\caption{Performance profile comparing \BPhocbin using GPU (NVidia L40S) with {\tt ISAL1} and the LP
formulation solved with {\tt HiGHS} in the problems from LSS test set with at least $1,000$ rows and $10,000$ columns.}
\label{fig:LSSlarge}
\end{figure}

\section{Concluding remarks}\label{sec:concluding}

In this paper, we have proposed \BPMAP, an algorithm for solving the Basis Pursuit problem \eqref{BP}. The method combines a projection-based outer loop with inner iterations via the Method of Alternating Projections (MAP), applied to a sequence of expanding $\ell_1$-balls. We proved that the radii $r_k$ converge linearly to the optimal value $\bar r$,
that all cluster points of $(x^k)_{k\in\NN}$ and $(z^k)_{k\in\NN}$ are solutions
of~\eqref{BP}, and that when \eqref{BP} has a unique solution $x^\ast$, both
sequences converge linearly to $x^\ast$ with rate $\rho = 1 - \omega/\sqrt{n}$,
where $\omega$ is the Hoffman constant of a natural error bound between $\Bbar$
and $\Mcal$. Practical variants \BPMAPhoc, incorporating a heuristic optimality check, and \BPMAPbin, using binary updates, further improved performance. While convergence is guaranteed in the case of a unique solution, the general case remains open.  Benchmark experiments on both synthetic and real-world instances confirmed the competitiveness of \BPMAP,  against state-of-the-art open-source solvers. The numerical evidence, together with the Fejér monotonicity properties of
MAP and the automatic error bound between $\Bbar$ and $\Mcal$, supports
convergence in the non-unique case, but a complete proof is left for future
work.

\bibliographystyle{siamplain}

\bibliography{refs}

\end{document}